\newtheorem{theorem}{Theorem}[section]
\newtheorem{lemma}[theorem]{Lemma}
\theoremstyle{definition}
\theoremstyle{remark}
\numberwithin{equation}{section}
\begin{document}
\title[Precise asymptotics]{Precise asymptotics on the Birkhoff sums
for dynamical systems}

\author
{Lulu Fang}
\address
{Lulu Fang, School of Science, Nanjing University of Science and Technology, Nanjing, 210094, China}
\email{fanglulu1230@163.com}

\author
{Hiroki Takahasi}
\address{Hiroki Takahasi, Keio Institute of Pure and Applied Sciences (KiPAS), Department of Mathematics,
Keio University, Yokohama,
223-8522, Japan}
\email{hiroki@math.keio.ac.jp}

\author{Yiwei Zhang*}
\address{Yiwei Zhang, School of Mathematics and Statistics, Center for
Mathematical Sciences, Hubei Key Laboratory of Engineering Modeling
and Scientific Computing, Huazhong University of Sciences and
Technology, Wuhan 430074, China}
\email{yiweizhang@hust.edu.cn}

\subjclass[2010]{37A44, 37A50, 60F10.}

\keywords{Precise asymptotics, refined large deviation form, thermodynamics formalism, continued fraction.}

\thanks{*Corresponding author}

\begin{abstract}
We establish two precise asymptotic results on the Birkhoff sums for  dynamical systems. These results are parallel to that on
the arithmetic sums of independent and identically distributed random variables previously
obtained by Hsu and Robbins, Erd\H{o}s, Heyde.
We apply our results to the Gauss map
and obtain new precise asymptotics in the
theorem of L\'evy on the regular continued fraction expansion of irrational numbers in $(0,1)$.
\end{abstract}


\maketitle

\section{Introduction}
It is of general interest to investigate various probabilistic limit laws as stochastic properties for deterministic dynamical systems. 
The current paper is a contribution to this topic.
We establish
two precise asymptotics on the Birkhoff sums for
dynamical systems.
Let us first introduce its background from probability theory.

\medskip

\subsection{Precise asymptotics for i.i.d. case}
The concept of \emph{precise asymptotics} was initially introduced by Hsu and Robbins
\cite{HR47} under the heading of complete convergence. Since then, an abundance of papers have appeared in the setting of independent and identically distributed (i.i.d. for short) random valuables, see a survey paper \cite{GutSte} for more detailed information. Meanwhile, the researches on precise asymptotic topics also turn out to be closely relevant to the deviation theory \cite{PLS08}, and have great applications ranging from stochastic volatility models \cite{FGP} to statistical analysis \cite{PLS08}.

Let us briefly state the background on precise asymptotics for the i.i.d. case as follows. Denote by $\{X_{i}\}_{i\in\mathbb{N}}$ a sequence of i.i.d. random variables with $E|X_{i}|=0$, and write the arithmetic sum $S_{n}=\sum_{i=1}^{n}X_{i}$. The following theorem plays a fundamental role in precise asymptotics.
The first part
was obtained by Hsu and Robbins \cite{HR47}, while 
the second part was obtained later by Erd\H{o}s \cite{Erdos}.
\begin{theorem}[\cite{HR47, Erdos}]\label{theo:HRE}
If $EX_{1}^2<\infty$, then for all $\varepsilon>0$,
\begin{equation}\label{equ:preciseasymptotic}
  \sum_{n=1}^{\infty}\mathbb{P}(|S_{n}|>n\varepsilon)<\infty.
\end{equation}
Conversely, if the sum \eqref{equ:preciseasymptotic} is finite for some $\varepsilon>0$, then $EX_{1}^{2}<\infty$ and the sum is finite for all $\varepsilon>0$.
\end{theorem}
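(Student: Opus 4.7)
The plan is to handle the two implications separately: a moment truncation plus a fourth-moment Chebyshev bound for the forward direction, and symmetrization combined with a maximal inequality for the converse.

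For the forward implication, assume $EX_{1}^{2}<\infty$ and fix $\varepsilon>0$. I would truncate at level $n$ by setting $Y_{n,i}=X_{i}\mathbf{1}_{\{|X_{i}|\le n\}}$ and split
\[
\mathbb{P}(|S_{n}|>n\varepsilon)\le \sum_{i=1}^{n}\mathbb{P}(|X_{i}|>n)+\mathbb{P}\!\left(\Bigl|\sum_{i=1}^{n}(Y_{n,i}-EY_{n,i})\Bigr|>n\varepsilon/2\right)
\]
once $n$ is large enough that $|nEY_{n,1}|\le n\varepsilon/2$, which is legitimate because $EY_{n,1}\to EX_{1}=0$ by dominated convergence. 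The first sum contributes $\sum_{n}n\mathbb{P}(|X_{1}|>n)$, which is finite precisely because $EX_{1}^{2}<\infty$. For the second term I would apply Markov with the i.i.d.\ fourth-moment identity
\[
E\!\left(\sum_{i=1}^{n}(Y_{n,i}-EY_{n,i})\right)^{\!4}=nE(Y_{n,1}-EY_{n,1})^{4}+3n(n-1)(\mathrm{Var}\,Y_{n,1})^{2},
\]
and reduce, via Fubini, to the two bounds $\sum_{n}n^{-3}E(X_{1}^{4}\mathbf{1}_{\{|X_{1}|\le n\}})<\infty$ and $\sum_{n}n^{-2}(EX_{1}^{2})^{2}<\infty$; the first follows from $\sum_{n\ge m}n^{-3}\asymp m^{-2}$ together with $EX_{1}^{2}<\infty$, while the second is trivial.

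For the converse, suppose $\sum_{n}\mathbb{P}(|S_{n}|>n\varepsilon)<\infty$ for some $\varepsilon>0$. The elementary inclusion $\{|X_{n}|>2n\varepsilon\}\subset\{|S_{n}|>n\varepsilon\}\cup\{|S_{n-1}|>n\varepsilon\}$ already yields $\sum_{n}\mathbb{P}(|X_{1}|>2n\varepsilon)<\infty$, hence $E|X_{1}|<\infty$. To upgrade this to $EX_{1}^{2}<\infty$, I would symmetrize: letting $\{X_{i}'\}$ be an independent copy and $\widetilde{X}_{i}=X_{i}-X_{i}'$, the triangle inequality gives $\mathbb{P}(|\widetilde{S}_{n}|>2n\varepsilon)\le 2\mathbb{P}(|S_{n}|>n\varepsilon)$. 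Since $\widetilde{X}_{i}$ is symmetric, L\'evy's maximal inequality yields $\mathbb{P}(\max_{k\le n}|\widetilde{S}_{k}|>2n\varepsilon)\le 2\mathbb{P}(|\widetilde{S}_{n}|>2n\varepsilon)$, and $|\widetilde{X}_{j}|>4n\varepsilon$ for some $j\le n$ forces $\max_{k\le n}|\widetilde{S}_{k}|>2n\varepsilon$. Writing $p_{n}=\mathbb{P}(|\widetilde{X}_{1}|>4n\varepsilon)$, independence makes the resulting summable quantity equal to $1-(1-p_{n})^{n}$; since the summands tend to $0$, the product $np_{n}$ must stay bounded (else $(1-p_{n})^{n}\le e^{-np_{n}}\to 0$ along a subsequence), and on any bounded range $1-(1-p_{n})^{n}\ge c\,np_{n}$. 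Hence $\sum_{n}np_{n}<\infty$, equivalently $E\widetilde{X}_{1}^{2}<\infty$; combined with $E|X_{1}|<\infty$ and the identity $E\widetilde{X}_{1}^{2}=2\,\mathrm{Var}(X_{1})$, this gives $EX_{1}^{2}<\infty$. Finiteness of the sum for every $\varepsilon>0$ then follows from the already-proved forward implication.

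The main obstacle is the converse, where one must recover a two-sided second-moment bound on a single coordinate $X_{1}$ from a deviation bound on the whole sum $S_{n}$. The direct coordinatewise comparison only delivers $E|X_{1}|<\infty$, which falls short of $EX_{1}^{2}<\infty$; bridging that gap forces both the symmetrization step (to unlock L\'evy's inequality) and a sharp passage from $\sum_{n}[1-(1-p_{n})^{n}]<\infty$ to $\sum_{n}np_{n}<\infty$, together with care that the second-moment condition transfers back from $\widetilde{X}_{1}$ to $X_{1}$ through independence of the copies rather than any naive triangle inequality in $L^{2}$.
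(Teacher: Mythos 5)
The paper does not actually prove this statement: it is quoted as the classical Hsu--Robbins--Erd\H{o}s theorem with citations to \cite{HR47} and \cite{Erdos}, and serves only as background for the dynamical results. Your argument therefore has to be judged against the standard literature proof, and it is essentially that proof, correctly executed. The forward half (truncation at level $n$, centering, Markov's inequality with the i.i.d.\ fourth-moment identity, and the Fubini reduction of $\sum_n n^{-3}E(X_1^4\mathbf{1}_{\{|X_1|\le n\}})$ to $EX_1^2<\infty$) is sound; note that the crude bound $EY_{n,1}^4\le n^2EX_1^2$ would only give a divergent harmonic series, so the Fubini step you invoke is genuinely needed and is the right move. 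The converse (symmetrization, L\'evy's maximal inequality, and extracting $\sum_n np_n<\infty$ from $\sum_n[1-(1-p_n)^n]<\infty$ via boundedness of $np_n$ and the lower bound $1-(1-p)^n\ge np(1-p)^{n-1}$) is Erd\H{o}s's argument and is also correct. The one spot to tighten is the final desymmetrization: the identity $E\widetilde X_1^2=2\,\mathrm{Var}(X_1)$ presupposes $EX_1^2<\infty$, which is what you are trying to establish; the clean route is Tonelli --- $E\widetilde X_1^2=\iint(x-y)^2\,d\mu(x)\,d\mu(y)<\infty$ forces $\int(x-y_0)^2\,d\mu(x)<\infty$ for some $y_0$, hence $EX_1^2<\infty$ --- or equivalently the weak symmetrization inequality $\mathbb{P}(|X_1-\mathrm{med}(X_1)|>t)\le 2\mathbb{P}(|\widetilde X_1|>t)$. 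Since you explicitly flag that this transfer needs care, this is a presentational point rather than a gap.
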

Theorem~\ref{theo:HRE} can be viewed as a refined result on the rate of convergence in the law of large numbers: not only the terms $\mathbb{P}(|S_{n}|>n\varepsilon)$ have to tend to 0 as $n$ tends to infinity, but the sum of them has to converge, which contains more information.

By using more general results linking the integrability of the summands to the rate of convergence in the law of large numbers, a series of paper (see for examples Spitzer \cite{Spitzer}, Katz \cite{Katz}) pursued Theorem~\ref{theo:HRE} further. 
Baum and Katz \cite{BK65} provided necessary and sufficient conditions for the convergence of the series
$$
\sum_{n=1}^{\infty}n^{r/p-2}\mathbb{P}(|S_{n}|\geq \varepsilon n^{1/p})<\infty
$$
for general $0<p<2$ and $r\geq p$.

\medskip


Another way to view these sums is to note that $\mathbb{P}(|S_{n}|>n\varepsilon)$ is non-decreasing, and tends to infinity as $\varepsilon$ tends to $0$. It is therefore of interest to find the rate at which this occurs. This amounts to finding appropriate normalization of functions of $\varepsilon$ that yield nontrivial limits.  In this direction, Heyde \cite{Heyde} proved that
\begin{theorem}[\cite{Heyde}]\label{theo:rate}
\begin{equation*}\label{equ:rate}
\lim_{\varepsilon\to 0}\varepsilon^{2} \sum_{n=1}^{\infty} \mathbb{P}(|S_{n}|\geq \varepsilon n)=EX^{2},
\end{equation*}
whenever $EX^{2}<\infty$.
\end{theorem}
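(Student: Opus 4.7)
Write $\sigma^2=EX^2$ and let $Z\sim N(0,1)$. The plan is to recognise the sum in question as a Riemann sum whose limit equals $\sigma^2$ via the layer-cake formula, with the integrand supplied by the central limit theorem.

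The layer-cake formula applied to $(\sigma Z)^2$ gives
$$\sigma^2\;=\;E(\sigma Z)^2\;=\;\int_0^\infty \mathbb{P}\bigl((\sigma Z)^2\geq y\bigr)\,dy\;=\;\int_0^\infty\mathbb{P}\bigl(\sigma|Z|\geq\sqrt y\bigr)\,dy.$$
Setting $y_n=n\varepsilon^2$, so that $\sqrt{y_n}=\varepsilon\sqrt n$ and $y_{n+1}-y_n=\varepsilon^2$, one rewrites
$$\varepsilon^2\sum_{n=1}^\infty\mathbb{P}(|S_n|\geq\varepsilon n)\;=\;\sum_{n=1}^\infty(y_{n+1}-y_n)\,\mathbb{P}\!\left(\frac{|S_n|}{\sqrt n}\geq\sqrt{y_n}\right),$$
which is a Riemann sum on $(0,\infty)$ of mesh $\varepsilon^2$. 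For each fixed $y>0$ the index $n(y):=\lfloor y/\varepsilon^2\rfloor$ tends to infinity as $\varepsilon\to 0$, and the central limit theorem together with the continuity of the law of $\sigma Z$ yields the pointwise convergence
$$\mathbb{P}\!\left(\frac{|S_{n(y)}|}{\sqrt{n(y)}}\geq\sqrt y\right)\longrightarrow\mathbb{P}\bigl(\sigma|Z|\geq\sqrt y\bigr).$$
If one is allowed to exchange limit and integration here, the two displays combine to give $\lim_{\varepsilon\to 0}\varepsilon^2\sum_n\mathbb{P}(|S_n|\geq\varepsilon n)=\sigma^2$, as required.

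The main obstacle is the justification of this exchange. The naive Chebyshev bound $\mathbb{P}(|S_n|\geq\varepsilon n)\leq\sigma^2/(\varepsilon^2 n)$ produces the dominator $\sigma^2/y$, which is not integrable at infinity, so a sharper uniform-in-$\varepsilon$ tail estimate is required. The standard device is a truncation argument: split $X_i=X_i\mathbf{1}_{\{|X_i|\leq\delta n\}}+X_i\mathbf{1}_{\{|X_i|>\delta n\}}$ for a small $\delta>0$ that one eventually sends to $0$, control the truncated piece by a Bernstein or Bennett inequality, and control the residual by Markov's inequality together with the uniform integrability of $X_1^2$ (a direct consequence of $EX_1^2<\infty$). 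This yields a dominator on $\{y\geq M\}$ whose integral vanishes as $M\to\infty$, uniformly in $\varepsilon$. The small-$y$ regime is harmless, since both the Riemann sum and the limiting integral restricted to $[0,\delta)$ are at most $\delta$. With these two ends tamed, dominated convergence completes the argument.
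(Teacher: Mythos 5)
First, note that the paper does not prove this statement at all: Theorem~\ref{theo:rate} is quoted from Heyde's paper \cite{Heyde} as background, and the only argument of this type carried out in the text is the proof of the dynamical analogue, Theorem~\ref{theo:main}. There the Gaussian main term $\sum_n\Phi(-\varepsilon\sqrt{n}/\sigma)$ is evaluated by the Euler--Maclaurin formula (Lemma~\ref{lem:1}), the range $n\le K/\varepsilon^2$ is controlled by the P\'olya theorem, and the crucial tail $n>K/\varepsilon^2$ is handled by the \emph{assumed} bound $\Lambda_n^{+}(\varepsilon)\le Ce^{-I(\varepsilon)n}$ from {\bf LD}. Your layer-cake/Riemann-sum computation of the main term is a perfectly reasonable alternative to Euler--Maclaurin, and your treatment of the bounded range via the CLT matches the paper's.

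The genuine gap is exactly where you flag the difficulty: the uniform-in-$\varepsilon$ domination for large $n$, and the tool you propose does not deliver it. If you truncate at level $\delta n$ and apply Bernstein/Bennett to $S_n'=\sum_i X_i\mathbf{1}_{\{|X_i|\le\delta n\}}$, the exponent is of order $\varepsilon^2n^2/(n\sigma^2+\delta n\cdot\varepsilon n)$, which \emph{saturates} at the constant $\asymp\varepsilon/\delta$ as $n\to\infty$; a bound of the form $e^{-c\varepsilon/\delta}$ summed over $n$ diverges, so no integrable dominator in $y=n\varepsilon^2$ is produced. (This is the same reason the paper cannot derive {\bf LD} from {\bf CLT} and must assume it.) The standard repair, which is essentially what Heyde's and the Hsu--Robbins/Erd\H{o}s arguments do, is a Fuk--Nagaev-type inequality with a \emph{polynomial} term: $\mathbb{P}(|S_n|\ge\varepsilon n)\le nP(|X|>\delta n)+C_p\bigl(\sigma^2/(\varepsilon^2 n)\bigr)^{p}$ for some $p\ge2$. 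Then $\varepsilon^2\sum_n nP(|X|>\delta n)\le C\varepsilon^2 EX^2/\delta^2\to0$, and $\varepsilon^2\sum_{n>K/\varepsilon^2}(\sigma^2/(\varepsilon^2n))^{p}=O(K^{1-p})\to0$ as $K\to\infty$. Without replacing ``Bernstein or Bennett'' by such an inequality (or an Erd\H{o}s-style iterated truncation), the dominated-convergence step at the end of your argument is not justified, and this step is the actual content of the theorem.
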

Extensions of Theorem \ref{theo:rate} for more general values of $r$ and $p$ have also been investigated in \cite{Chen,GS1,Spataru}. Such extensions include the rate estimations of
$$
\lim_{\varepsilon\to0}\varepsilon^{\frac{2(r-p)}{2-p}}\sum_{n=1}^{\infty}n^{r/p-2}\mathbb{P}(|S_{n}|\geq \varepsilon n^{1/p})
$$
and
$$
\lim_{\varepsilon\to0}\frac{1}{-\log\varepsilon}\sum_{n=1}^{\infty}\frac{1}{n}\mathbb{P}(|S_{n}|\geq \varepsilon n^{1/p})
$$
with $r\geq 2$ and $0<p<2$. In the view of central limit theorem, there are no analogous result for $p=2$. However, there are further results by replacing $n^{1/p}$ by
$\sqrt{n\log n}$ or $\sqrt{n\log\log n}$, see \cite{GS2}.

\medskip

\subsection{Statements of main results}
As a comparison, we will adapt some of the precise asymptotic results
mentioned above to the dynamical systems setting where the independence is usually absent.
We consider an ergodic measure-preserving system
$(X, T, m)$
and a measurable observable $f\colon X \to \mathbb{R}$ with $\int fdm=0$.
Put the Birkhoff sum
\[
S_nf:= f+f\circ T+\cdots+f\circ T^{n-1}.
\]
By the Birkhoff ergodic theorem,
\[
\lim_{n \to \infty} \frac{S_nf(x)}{n} =0
\ \ \ \ m\text{-a.e. }\,x\in X.
\]
For any $\varepsilon > 0$ and $n \in \mathbb{N}$, let
$\Lambda_n(\varepsilon)=\Lambda_n^+(\varepsilon)+\Lambda_n^-(\varepsilon)$, where
\[
\Lambda^{+}_n(\varepsilon)= m\left\{x \in X: \frac{S_nf(x)}{n} \geq \varepsilon\right\}
\quad\text{and}\quad
\Lambda^{-}_n(\varepsilon)= m\left\{x \in X: \frac{S_nf(x)}{n} \leq -\varepsilon\right\}.
\]
Let $\Lambda(\varepsilon):=\Lambda^+(\varepsilon)+\Lambda^-(\varepsilon)$ with
\[\Lambda^{+}(\varepsilon) = \sum_{n =1}^\infty \Lambda^{+}_n(\varepsilon)\ \ \ \ \ \text{and}\ \ \ \ \ \Lambda^{-}(\varepsilon) = \sum_{n =1}^\infty \Lambda^{-}_n(\varepsilon).
\]
With these notations, the main theorems are as follows.
\begin{theorem}[Main Theorem]\label{theo:main}
Suppose the following hypothesis hold:
\begin{itemize}
  \item {\bf CLT}: there exists $0<\sigma<\infty$ such that
 \[
 \frac{S_nf}{\sigma \sqrt{n}} \Longrightarrow \mathcal{N}(0,1).
\]
\item {\bf LD}:
there exist constants $\delta>0$, $M>0$, $C>0$
  and a $C^2$ 
  function $I: (-\delta,\delta) \to [0,\infty)$ such that the following holds:
\begin{itemize}
\item
$I(0)=0$, $I^\prime(0) = 0$, $I^{\prime\prime}(0) >0$.
\item for any $n \geq 1$ and $\varepsilon\in(0, \delta)$ such that $\varepsilon > M/n$,
\[ \Lambda^+_n(\varepsilon) \leq C e^{-I(\varepsilon)n}\quad\text{and}
 \quad \Lambda^-_n(\varepsilon) \leq C e^{-I(-\varepsilon)n}.\]
\end{itemize}
\end{itemize}
Then we have
\begin{equation}\label{equ:Heyderesult}
\lim_{\varepsilon \to 0}\varepsilon^2  \Lambda(\varepsilon) = \sigma^2.
\end{equation}
\end{theorem}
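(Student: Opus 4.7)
The plan is to approximate $\varepsilon^2 \Lambda^{\pm}(\varepsilon)$ by a Gaussian tail integral, using the CLT for the bulk range of $n$ and the LD bound to control the tail. Setting $Z_n = S_nf/(\sigma\sqrt{n})$ and $t_n = \varepsilon\sqrt{n}/\sigma$, we have $\Lambda_n^+(\varepsilon) = m\{Z_n \geq t_n\}$, and it is natural to split the sum at $n \asymp 1/\varepsilon^2$. Concretely, introduce auxiliary thresholds $0 < A < B$ and decompose according to whether $n \leq A^2\sigma^2/\varepsilon^2$ (so $t_n \leq A$), $A^2\sigma^2/\varepsilon^2 < n \leq B^2\sigma^2/\varepsilon^2$ (so $t_n \in (A,B]$), or $n > B^2\sigma^2/\varepsilon^2$ (so $t_n > B$).

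For the far tail $n > B^2\sigma^2/\varepsilon^2$, the constraint $\varepsilon > M/n$ in the LD hypothesis holds once $\varepsilon$ is small. Taylor expanding $I(\varepsilon) = \tfrac{1}{2}I''(0)\varepsilon^2 + o(\varepsilon^2)$, the geometric-series estimate yields
\[
\varepsilon^2 \sum_{n > B^2\sigma^2/\varepsilon^2} \Lambda_n^+(\varepsilon) \;\leq\; \frac{C\,\varepsilon^2\, e^{-I(\varepsilon) B^2 \sigma^2/\varepsilon^2}}{1 - e^{-I(\varepsilon)}} \;\longrightarrow\; \frac{2C}{I''(0)}\, e^{-I''(0)B^2\sigma^2/2},
\]
which tends to $0$ as $B \to \infty$. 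At the other end, bounding $\Lambda_n^+\leq 1$ gives the small-$n$ piece $\varepsilon^2 \sum_{n \leq A^2\sigma^2/\varepsilon^2} \Lambda_n^+(\varepsilon) \leq A^2\sigma^2$, which is small for small $A$.

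For the main range $t_n \in (A,B]$, the CLT together with P\'olya's theorem (the Gaussian CDF is continuous) gives $\sup_{t\in[A,B]}|m\{Z_n \geq t\} - \bar\Phi(t)| \to 0$ as $n \to \infty$, where $\bar\Phi(t) := 1 - \Phi(t)$; note that every $n$ in the main range satisfies $n \geq A^2\sigma^2/\varepsilon^2 \to \infty$. After multiplying by $\varepsilon^2$, the error from replacing $m\{Z_n\geq t_n\}$ by $\bar\Phi(t_n)$ contributes $\varepsilon^2 \cdot \#\{n: t_n \in (A,B]\} \cdot o(1) = O(B^2\sigma^2)\cdot o(1) = o(1)$. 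The remaining sum is a Riemann sum for $\bar\Phi$ against the measure $\varepsilon^2\, dn$: under $x = \varepsilon\sqrt{n}/\sigma$ one has $\varepsilon^2\, dn = 2\sigma^2 x\, dx$, so
\[
\varepsilon^2 \sum_{A^2\sigma^2/\varepsilon^2 < n \leq B^2\sigma^2/\varepsilon^2} \bar\Phi(t_n) \;\xrightarrow[\varepsilon \to 0]{}\; 2\sigma^2 \int_A^B x\, \bar\Phi(x)\, dx.
\]

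Assembling the three pieces and letting $\varepsilon \to 0$, then $A \to 0$ and $B \to \infty$, gives $\lim_{\varepsilon\to 0} \varepsilon^2 \Lambda^+(\varepsilon) = 2\sigma^2 \int_0^\infty x\, \bar\Phi(x)\, dx$. A Fubini swap evaluates this integral as $\tfrac{1}{2} E[Z^2 \mathbf{1}_{Z>0}] = \tfrac{1}{4}$, giving $\sigma^2/2$. An entirely analogous argument using the LD bound on $\Lambda_n^-$ (whose Taylor coefficient at $0$ is again $I''(0)/2$ since $I'(0)=0$) together with the symmetry of the Gaussian limit gives $\varepsilon^2 \Lambda^-(\varepsilon) \to \sigma^2/2$; the two halves add to $\sigma^2$, proving \eqref{equ:Heyderesult}. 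The principal technical subtlety is the uniformity required in the CLT step, since we evaluate the weak-convergence approximation at thresholds $t_n$ that vary with $n$; this is precisely what P\'olya's theorem supplies once the Gaussian limit is continuous, and the LD hypothesis is needed solely to make the tail $\varepsilon^{-2}$-summable so that the Riemann-integral limit is the whole story.
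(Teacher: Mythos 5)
Your proof is correct, and its skeleton coincides with the paper's: both reduce to the one-sided limit $\sigma^2/2$, both use P\'olya's theorem to upgrade the CLT to uniform convergence of distribution functions so that the approximation can be evaluated at the $n$-dependent thresholds $\varepsilon\sqrt{n}/\sigma$, and both kill the tail $n\gtrsim\varepsilon^{-2}$ by the same geometric-series bound $C\varepsilon^2 e^{-I(\varepsilon)N}/(1-e^{-I(\varepsilon)})$ combined with the Taylor expansion $I(\varepsilon)\sim\tfrac12 I''(0)\varepsilon^2$. The difference lies in how the Gaussian main term is computed. The paper keeps the full range $1\le n\le K/\varepsilon^2$, absorbs the CLT error there via a Ces\`aro argument on $\varepsilon^2\sum_{n\le K/\varepsilon^2}\Delta_n$, and evaluates $\sum_n\Phi(-\rho\sqrt n)$ by the Euler--Maclaurin formula, which gives the exact answer $\tfrac12\rho^{-2}+O(1)$ in one stroke. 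You instead introduce a third cut at $n\le A^2\sigma^2/\varepsilon^2$, discard it crudely by $\Lambda_n^+\le 1$ (contributing $O(A^2)$, matching the $O(A^2)$ piece of the target integral), and evaluate the middle range as a Riemann sum for $2\sigma^2\int_A^B x\,\bar\Phi(x)\,dx$, finishing with the Fubini identity $\int_0^\infty x\,\bar\Phi(x)\,dx=\tfrac14$. The Riemann-sum route is more elementary and self-contained (no Euler--Maclaurin needed; monotonicity of $\bar\Phi$ gives the sum--integral comparison with error $O(\varepsilon^2)$), at the cost of an extra parameter $A$ and an extra limit interchange; the paper's route is shorter once Lemma~\ref{emf} is available. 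Both are complete and correct.
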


\begin{theorem}\label{theo2:main}
Under the same assumptions as in Theorem \ref{theo:main}, we have
\[
\lim_{\varepsilon \to 0}\frac{1}{-\log \varepsilon}\sum_{n =1}^\infty \frac{\Lambda_n(\varepsilon)}{n}=2.
\]

\end{theorem}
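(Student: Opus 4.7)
The plan is to split the sum at a cutoff $n_a:=\lfloor a/\varepsilon^2\rfloor$ depending on an auxiliary parameter $a>0$:
\[
\sum_{n=1}^\infty\frac{\Lambda_n(\varepsilon)}{n}=\sum_{n\leq n_a}\frac{\Lambda_n(\varepsilon)}{n}+\sum_{n>n_a}\frac{\Lambda_n(\varepsilon)}{n}.
\]
In the range $n\leq n_a$, $\Lambda_n(\varepsilon)$ is $O(1)$ and one applies the CLT; beyond $n_a$, one applies the LD bound. The choice $n_a\asymp 1/\varepsilon^2$ is forced by the transition scale of $\Lambda_n(\varepsilon)$, and it is exactly this choice that produces the factor $2$ through $\log n_a=-2\log\varepsilon+O(1)$.

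For the upper bound, the trivial estimate $\Lambda_n(\varepsilon)\leq 1$ yields $\sum_{n\leq n_a}\Lambda_n(\varepsilon)/n\leq\log n_a+O(1)=-2\log\varepsilon+O_a(1)$. For the tail, the assumptions $I(0)=I'(0)=0$ and $I''(0)>0$ give the Taylor expansion $I(\pm\varepsilon)=\tfrac{1}{2}I''(0)\varepsilon^2+O(\varepsilon^3)$; hence, for small $\varepsilon$ and $n>n_a$ (which ensures $\varepsilon>M/n$ as soon as $\varepsilon<a/M$), the LD bound implies $\Lambda_n(\varepsilon)\leq 2Ce^{-c\varepsilon^2 n}$ for some $c>0$. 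A direct geometric-sum estimate then gives $\sum_{n>n_a}\Lambda_n(\varepsilon)/n=O_a(1)$, uniformly in small $\varepsilon$. Dividing by $-\log\varepsilon$ and sending $\varepsilon\to 0$ yields $\limsup\leq 2$.

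For the lower bound, I rely on Polya's theorem: since the standard normal CDF $\Phi$ is continuous, the CLT hypothesis upgrades to uniform convergence of $m\{S_nf/(\sigma\sqrt{n})\geq t\}\to\Phi^c(t):=1-\Phi(t)$ in $t\in\mathbb{R}$. Consequently, for each $\delta>0$ there is an integer $N_\delta$ such that $\Lambda_n^\pm(\varepsilon)\geq\Phi^c(\varepsilon\sqrt{n}/\sigma)-\delta$ for all $n\geq N_\delta$ and $\varepsilon>0$. Restricting to the window $N_\delta\leq n\leq n_a$ and using the monotonicity of $\Phi^c$, one obtains $\Lambda_n(\varepsilon)\geq 2\Phi^c(\sqrt{a}/\sigma)-2\delta$. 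Multiplying by $1/n$ and summing gives a lower bound of $[2\Phi^c(\sqrt{a}/\sigma)-2\delta](-2\log\varepsilon+O_{a,\delta}(1))$. Dividing by $-\log\varepsilon$ and passing to the limit first in $\varepsilon$, then in $a\to 0^+$ (so that $\Phi^c(\sqrt{a}/\sigma)\to 1/2$), and finally in $\delta\to 0^+$, one recovers $\liminf\geq 2$.

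The main technical point is the need for uniform, rather than pointwise, convergence in the CLT, since the argument $t=\varepsilon\sqrt{n}/\sigma$ of $\Phi^c$ varies with $n$ over a window whose length grows like $1/\varepsilon^2$; this is supplied by Polya's theorem. A secondary check is the admissibility condition $\varepsilon>M/n$ in the LD hypothesis, which is automatic for $n>n_a=\lfloor a/\varepsilon^2\rfloor$ once $\varepsilon<a/M$. The rest is routine bookkeeping of error terms in the nested limits $\varepsilon\to 0$, $a\to 0^+$, $\delta\to 0^+$.
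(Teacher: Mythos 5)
Your proposal is correct, and it takes a genuinely different route from the paper. The paper decomposes $\sum_n \Lambda_n^{+}(\varepsilon)/n$ into four pieces relative to the Gaussian: a CLT error term over $n\le\lfloor\varepsilon^{-2}\rfloor$ (controlled via P\'olya's theorem), two tail terms over $n>\lfloor\varepsilon^{-2}\rfloor$ (controlled by \textbf{LD} and by Gaussian tail bounds, respectively), and the main term $\sum_{n\ge1}\Phi(-\varepsilon\sqrt{n}/\sigma)/n$, whose asymptotics $\sim -\log\varepsilon$ it imports from Sp\u{a}taru's Proposition~1. You instead squeeze: for the upper bound you use only the trivial estimate $\Lambda_n(\varepsilon)\le 1$ (valid since the two events defining $\Lambda_n^{\pm}$ are disjoint) on $n\le\lfloor a/\varepsilon^2\rfloor$ together with \textbf{LD} on the tail, and for the lower bound you use P\'olya's theorem on the window $N_\delta\le n\le\lfloor a/\varepsilon^2\rfloor$ with the extra limit $a\to0^+$ so that $\Phi^c(\sqrt{a}/\sigma)\to 1/2$. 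This makes the argument self-contained (no appeal to Sp\u{a}taru) and isolates the real source of the constant $2$: the harmonic weights only see the transition scale $n\asymp\varepsilon^{-2}$, via $\log(a/\varepsilon^2)=-2\log\varepsilon+O_a(1)$, together with the fact that $\Lambda_n(\varepsilon)\to1$ in the bulk; the precise Gaussian profile is irrelevant. The paper's route, by contrast, reuses the machinery already set up for Theorem~\ref{theo:main} and yields the Gaussian sum asymptotics as the identifiable main term. All the checkpoints you flag (uniformity in $t$ from P\'olya, the admissibility condition $\varepsilon>M/n$ for $n>\lfloor a/\varepsilon^2\rfloor$ once $\varepsilon<a/M$, and the quadratic lower bound $I(\pm\varepsilon)\ge c\varepsilon^2$ from $I(0)=I'(0)=0$, $I''(0)>0$) are exactly the right ones, and the nested limits $\varepsilon\to0$, then $a\to0^+$, then $\delta\to0^+$ are taken in a consistent order.
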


Let us comment on the two key assumptions in Theorems~\ref{theo:main} and \ref{theo2:main}. {\bf CLT} requires that the limiting variance $\sigma$ does not vanish, which is often assumed in previously known results on the central limit theorem. One way to verify the positivity
of the limiting variance is to use Livsi\v{c} theorems on measurable rigidity (see e.g., \cite{Liv71,Liv72,ParPol97}).
For example,
 for expanding Markov interval maps with infinitely many branches
(see Section \ref{exp-m} for the definition)
this can be shown by the Livsi\v{c} theorem of Aaronson and Denker \cite{AD01}.
For these maps,
Morita \cite{Mor94} earlier verified
the positivity of the limiting variance and hence {\bf CLT} for a large class of observables.

{\bf LD} is a refinement of large deviations
from the mean $0$. It implies that
for any $\varepsilon\in(0,\delta)$,
\begin{equation}\label{eq-theo2}\limsup_{n\to\infty} \frac{1}{n}\log\Lambda_n^+(\varepsilon)\leq I(\varepsilon)\quad\text{and}\quad
\limsup_{n\to\infty} \frac{1}{n}\log\Lambda_n^-(\varepsilon)\leq I(-\varepsilon).\end{equation}
There is a wealth of results
for various kinds of dynamical systems
which establish the existence of rate functions defined on a small neighborhood of the mean, and
replace the inequalities in \eqref{eq-theo2} by equalities
using the rate functions. These large deviations results, as well as
\eqref{eq-theo2}
take the limit $n\to\infty$, and therefore
never implies {\bf LD}.

Stronger bounds than those in {\bf LD} have been verified for a large class of uniformly hyperbolic dynamical systems.
 For  expanding Markov interval maps with finitely many branches
(see Section~\ref{exp-m} for the definition)
and H\"older continuous observables with mean $0$,
Chazotttes and Collet  \cite[Lemma~A.1]{ChaCol05}
obtained such bounds 
under the assumption of {\bf CLT}:
for sufficiently small $\varepsilon>0$,
$\Lambda_n(\varepsilon)$ is bounded from both sides by
constant multiples of $e^{-I(\varepsilon)n}/\sqrt{n}$.
These bounds are in agreement with the i.i.d. case in \cite[Theorem~1]{BR60}. Waddington \cite[Theorem~1]{Wad96} obtained a corresponding result
for Anosov flows.
An important assumption in
\cite{ChaCol05,Wad96} is that the dynamical systems are modeled by topological Markov shifts over a finite alphabet.
In \cite{Tak20}, 
 {\bf LD} was shown to hold for 
the Gauss map, that is an expanding Markov interval map with infinitely many branches.

 This paper is organized as follows. Section~\ref{sec:proof of main theorems} provides proofs of Theorems \ref{theo:main} and \ref{theo2:main}. Our strategy is to modify the proofs by Heyde \cite{Heyde} and Sp\u{a}taru \cite{Spataru} in the i.i.d. case, by using {\bf LD} and {\bf CLT} to compensate the lack of independence.
 One key step in their proofs
 is to deduce an accurate upper bound estimation of $\mathbb{P}(|S_{n}|>n\varepsilon)$. Two 
 upper bound formulas were previously obtained in \cite[p. 175]{Heyde} and
 \cite[Lemma~2]{Spataru}. These formulas heavily rely on the independence, and
 it is difficult to check their validity in the dynamical systems setting. Therefore, we put {\bf LD} and {\bf CLT} as assumptions, and deduce from them a new upper bound of $\mathbb{P}(|S_{n}|>n\varepsilon)$.


Section \ref{sec:expandingmarkov} provides applications to expanding Markov interval maps including the Gauss map.
 We will apply Theorems~\ref{theo:main} and \ref{theo2:main} to this setting and obtain a new precise asymptotics in L\'{e}vy's theorem \cite{Levy} for the regular continued fraction expansion of irrational numbers in $(0,1)$,
 see Theorem~\ref{qn1}.




\section{Proof of main results}\label{sec:proof of main theorems}
This section is devoted to the proofs of Theorems~\ref{theo:main} and \ref{theo2:main}.
Let us begin with some useful lemmas. The first one is the classical Euler-Maclaurin formula, see Theorem~7.13 in \cite[p. 149]{Apo}.
\begin{lemma}[the Euler-Maclaurin formula]\label{emf}
Let $a, b \in \mathbb{Z}$ with $a<b$.
Assume that $f$ has a continuous derivative $f^\prime$ on $[a,b]$. Then we have
\[
\sum^b_{n=a}f(n) = \int^b_af(x)dx + \int^b_af^\prime(x)\psi(x)dx +\frac{f(a)+f(b)}{2} ,
\]
where $\psi(x) = x - \lfloor x\rfloor -1/2$. Furthermore, if the improper integrals $\int^\infty_af(x)dx$ and $\int^\infty_af^\prime(x)\psi(x)dx$ are convergent and $f(x)\to 0$ as $x\to \infty$, then
\[
\sum^\infty_{n=a}f(n) = \int^\infty_af(x)dx + \int^\infty_af^\prime(x)\psi(x)dx +\frac{f(a)}{2}.
\]
\end{lemma}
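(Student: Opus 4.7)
The plan is to prove the finite identity by applying integration by parts on each unit subinterval $[k,k+1]$ and summing the resulting local identities, and then to pass to the infinite version by a routine limit argument.

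First, I would fix an integer $k$ with $a\leq k\leq b-1$ and work on the interval $[k,k+1]$. On the interior $(k,k+1)$ the floor function is constant, so $\psi(x)=x-k-1/2$ is smooth there with $\psi'(x)\equiv 1$. Integration by parts yields
\[
\int_k^{k+1} f'(x)\psi(x)\,dx = \bigl[f(x)\psi(x)\bigr]_k^{k+1} - \int_k^{k+1} f(x)\,dx,
\]
where the boundary evaluations are taken as the one-sided limits $\psi((k+1)^-)=1/2$ and $\psi(k^+)=-1/2$. The boundary contribution therefore equals $(f(k)+f(k+1))/2$, giving the local identity
\[
\int_k^{k+1} f(x)\,dx + \int_k^{k+1} f'(x)\psi(x)\,dx = \frac{f(k)+f(k+1)}{2}.
\]

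Second, I would sum this identity over $k=a,a+1,\ldots,b-1$. The left side becomes $\int_a^b f(x)\,dx + \int_a^b f'(x)\psi(x)\,dx$, while the right side rearranges (since each interior value $f(k)$ for $a<k<b$ appears twice with weight $1/2$) as $\sum_{n=a}^{b} f(n) - (f(a)+f(b))/2$. Rearranging produces the first claimed formula.

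Third, for the infinite version I would apply the finite identity with $b$ running over integers and let $b\to\infty$. By hypothesis the two improper integrals converge, so the two integrals on the right tend to their infinite counterparts; the assumption $f(x)\to 0$ forces $f(b)/2\to 0$; consequently the partial sums $\sum_{n=a}^b f(n)$ converge to the value asserted in the second formula. The only (minor) obstacle is the bookkeeping caused by the jump discontinuity of $\psi$ at each integer, which is handled by consistently using one-sided limits in the boundary terms; since $\psi$ is bounded and its values on the null set of integers do not affect any of the integrals, this causes no genuine difficulty, and the result is classical.
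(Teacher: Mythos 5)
Your argument is correct and complete: the local integration-by-parts identity on each $[k,k+1]$, the telescoping of the boundary terms when summing over $k$, and the passage to the limit $b\to\infty$ using the convergence hypotheses are all handled properly, and your remark that the jump of $\psi$ at integers is harmless (one replaces $\psi$ by $x-k-1/2$, which agrees with it off a null set) disposes of the only delicate point. The paper itself gives no proof of this lemma --- it simply cites Theorem~7.13 of Apostol's \emph{Mathematical Analysis} --- and your argument is precisely the standard one found there, so there is nothing to reconcile.
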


The second lemma is the P\'{o}lya theorem, see Theorem~9.1.4 in \cite[p. 290]{AL06}.

\begin{lemma}[the P\'{o}lya theorem]\label{lem:2}
Let $Y$ be a random variable and $\{Y_n\}_{n \in \mathbb{N}}$ be a sequence of random variables. Assume that for any $x\in \mathbb{R}$, $F_n(x) \to F(x)$ as $n \to \infty$, where $F_n$ and $F$ are distribution functions of $Y_n$ and $Y$ respectively. If $F$ is a continuous function, then
\[
\lim_{n \to \infty}\sup_{x\in \mathbb{R}}\left|F_n(x)-F(x)\right|=0.
\]
\end{lemma}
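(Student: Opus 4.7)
The plan is to leverage three structural facts about cumulative distribution functions: each $F_n$ and $F$ is non-decreasing and takes values in $[0,1]$; $F$ has limits $0$ and $1$ at $\mp\infty$; and $F$ is, by hypothesis, continuous on all of $\mathbb{R}$. The basic idea is to convert pointwise convergence into uniform convergence by discretizing $\mathbb{R}$ via a finite grid and exploiting monotonicity to control the error between grid points.

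First, given $\varepsilon > 0$, I will choose finitely many points $x_1 < x_2 < \cdots < x_K$ such that $F(x_1) < \varepsilon$, $F(x_K) > 1 - \varepsilon$, and $F(x_{k+1}) - F(x_k) < \varepsilon$ for every $k \in \{1,\ldots,K-1\}$. Continuity of $F$ together with $F(-\infty)=0$ and $F(+\infty)=1$ guarantees such a grid: pick $x_1$ and $x_K$ from the tails using the boundary limits, then use the intermediate value theorem to interpolate intermediate points so that $F$ rises by less than $\varepsilon$ across consecutive intervals.

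Second, because $F_n(x_k) \to F(x_k)$ at each of the finitely many grid points, there exists $N = N(\varepsilon)$ such that $|F_n(x_k) - F(x_k)| < \varepsilon$ for every $n \geq N$ and every $k \in \{1,\ldots,K\}$. For an arbitrary $x \in [x_k, x_{k+1}]$, monotonicity gives $F_n(x_k) \leq F_n(x) \leq F_n(x_{k+1})$ and $F(x_k) \leq F(x) \leq F(x_{k+1})$; subtracting these sandwiches, and using $F(x_{k+1}) - F(x_k) < \varepsilon$ together with the grid-point estimate, yields $|F_n(x) - F(x)| \leq 2\varepsilon$, uniformly in $x$ within the bulk. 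For the tails $x < x_1$, both $F_n(x)$ and $F(x)$ lie in $[0, F(x_1)+\varepsilon] \subseteq [0, 2\varepsilon]$ once $n \geq N$, so their difference is at most $2\varepsilon$; the tail $x > x_K$ is handled symmetrically via $1 - F(x_K) < \varepsilon$. Since $\varepsilon$ was arbitrary, this gives $\sup_{x \in \mathbb{R}} |F_n(x) - F(x)| \to 0$.

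There is no serious obstacle; the argument is a routine uniform-continuity upgrade specialized to monotone functions. The only subtle point is the tail control, which is precisely why the hypothesis that $F$ is a bona fide distribution function (with limits $0$ and $1$) is essential—without such a boundary condition, mass could escape to infinity and uniformity would fail even when $F$ is continuous.
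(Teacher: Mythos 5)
Your proof is correct. Note that the paper does not prove this lemma at all---it simply cites it as Theorem~9.1.4 of Athreya and Lahiri's \emph{Measure Theory and Probability Theory}---and your argument is precisely the standard proof of P\'olya's theorem: a finite grid chosen via continuity of $F$ and its tail limits $0$ and $1$, combined with monotonicity of the $F_n$ to interpolate between grid points, with the two tails handled by the boundary values. All steps, including the tail estimates, are carried out correctly.
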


We denote by $\Phi(\cdot)$ the distribution function of the standard normal random variable, namely
\[
\Phi(x) = \frac{1}{\sqrt{2\pi}} \int^x_{-\infty} e^{-t^2/2}dt = \frac{1}{\sqrt{2\pi}} \int^\infty_{-x} e^{-t^2/2}dt.
\]
The following result gives the lower bound and the upper bound for $\Phi(x)$. See Lemma~6.1.6 in \cite[p. 162--163]{BP17}.

\begin{lemma}\label{normal}
For all $x >0$,
\[
\frac{x}{x^2+1}\cdot\frac{1}{\sqrt{2\pi}}e^{-x^2/2} \leq \Phi(-x) \leq \frac{1}{x}\cdot\frac{1}{\sqrt{2\pi}}e^{-x^2/2}.
\]
\end{lemma}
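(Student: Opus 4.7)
The plan is to prove both bounds by comparing the auxiliary functions
\[
L(x) = \frac{1}{\sqrt{2\pi}}\cdot\frac{x}{x^2+1}\,e^{-x^2/2}
\quad\text{and}\quad
U(x) = \frac{1}{\sqrt{2\pi}}\cdot\frac{1}{x}\,e^{-x^2/2}
\]
against $\Phi(-x)$ via their derivatives, exploiting the fact that all three functions vanish as $x \to \infty$.

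First I would note that since $\Phi(-x)$, $L(x)$, and $U(x)$ all tend to $0$ as $x\to\infty$, it suffices to establish $U'(x)\le \frac{d}{dx}\Phi(-x) \le L'(x)$ on $(0,\infty)$; integrating from $x$ to $+\infty$ will then yield $U(x)\ge \Phi(-x)\ge L(x)$. By the fundamental theorem of calculus, $\frac{d}{dx}\Phi(-x) = -\frac{1}{\sqrt{2\pi}}\,e^{-x^2/2}$. A direct differentiation gives
\[
U'(x)=-\frac{1}{\sqrt{2\pi}}\Bigl(1+\frac{1}{x^2}\Bigr)e^{-x^2/2},
\]
so the inequality $U'(x)\le \frac{d}{dx}\Phi(-x)$ is immediate from $1+x^{-2}\ge 1$, and the upper bound follows.

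For the lower bound the computation is slightly more involved. Differentiating and combining the two rational summands $\frac{1-x^2}{(x^2+1)^2}$ and $-\frac{x^2}{x^2+1}$ over a common denominator, one obtains
\[
L'(x)=-\frac{1}{\sqrt{2\pi}}\cdot\frac{x^4+2x^2-1}{(x^2+1)^2}\,e^{-x^2/2}.
\]
The inequality $L'(x)\ge \frac{d}{dx}\Phi(-x)$ then reduces to $\frac{x^4+2x^2-1}{(x^2+1)^2}\le 1$, which follows from the algebraic identity $(x^2+1)^2-(x^4+2x^2-1)=2$. The only nontrivial step is the correct simplification of $L'$; the integration argument and the algebraic identity are routine, so I do not expect a substantive obstacle.
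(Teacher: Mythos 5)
Your proof is correct: the derivative computations for $U$ and $L$ check out, the comparison of $U'$, $\frac{d}{dx}\Phi(-x)$, and $L'$ reduces to the trivial inequalities $1+x^{-2}\ge 1$ and $(x^2+1)^2-(x^4+2x^2-1)=2>0$, and integrating over $(x,\infty)$ with all three functions vanishing at infinity gives the claimed bounds. The paper does not prove this lemma at all --- it simply cites Lemma~6.1.6 of Bishop--Peres --- and your argument is essentially the standard one found there (comparing $e^{-t^2/2}$ with the exact derivatives of the bounding functions), so no further comment is needed.
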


\subsection{Proof of Theorem \ref{theo:main}}

We will first prove the following lemma.
\begin{lemma}\label{lem:1}
  \begin{equation*}
\lim_{\rho \to 0^+}\rho^2  \sum_{n =0}^{\infty} \Phi\left(-\rho\sqrt{n}\right) = \frac{1}{2}.
\end{equation*}
\end{lemma}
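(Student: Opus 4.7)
The plan is to compare the sum with the integral $\int_0^{\infty} \Phi(-\rho\sqrt{x})\,dx$, which can be computed explicitly, and use monotonicity of the integrand to control the discrepancy. This exhibits $\rho^2\sum_{n\geq 0}\Phi(-\rho\sqrt n)$ as a Riemann-type approximation to the scale-invariant quantity $\rho^2\int_0^\infty \Phi(-\rho\sqrt{x})\,dx$.

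First I would evaluate the integral. The substitution $u=\rho\sqrt{x}$ converts it into
\[
\int_0^{\infty}\Phi(-\rho\sqrt{x})\,dx=\frac{2}{\rho^2}\int_0^{\infty}u\,\Phi(-u)\,du.
\]
Integration by parts, with $U=\Phi(-u)$ (so $dU=-\tfrac{1}{\sqrt{2\pi}}e^{-u^2/2}\,du$) and $V=u^2/2$, reduces the right-hand integral to $\tfrac12\int_0^{\infty}\tfrac{u^2}{\sqrt{2\pi}}e^{-u^2/2}\,du=\tfrac14$, using that the second moment of the standard normal equals $1$. The vanishing of the boundary term at infinity follows from the upper bound in Lemma \ref{normal}. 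Hence $\int_0^{\infty}\Phi(-\rho\sqrt{x})\,dx=\tfrac{1}{2\rho^2}$.

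Next I would sandwich the sum. Since $x\mapsto \Phi(-\rho\sqrt{x})$ is nonnegative and decreasing on $[0,\infty)$, the elementary integral comparison gives
\[
\int_0^{\infty}\Phi(-\rho\sqrt{x})\,dx\;\leq\;\sum_{n=0}^{\infty}\Phi(-\rho\sqrt{n})\;\leq\;\Phi(0)+\int_0^{\infty}\Phi(-\rho\sqrt{x})\,dx.
\]
Multiplying through by $\rho^2$ yields $\tfrac12\leq \rho^2\sum_{n=0}^{\infty}\Phi(-\rho\sqrt{n})\leq \tfrac12+\tfrac{\rho^2}{2}$, and the squeeze theorem concludes the proof as $\rho\to 0^+$.

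There is no genuine obstacle: the substantive content is confined to the single integration-by-parts calculation. If a more refined expansion were wanted for use elsewhere in the paper, one could instead invoke Lemma \ref{emf} for $f(x)=\Phi(-\rho\sqrt{x})$ on $[1,\infty)$, separating the $n=0$ term to avoid the singularity of $f'$ at the origin, and bound the Euler--Maclaurin correction via $\bigl|\int_1^{\infty}f'(x)\psi(x)\,dx\bigr|\leq \tfrac12\int_1^{\infty}|f'(x)|\,dx=\tfrac12\Phi(-\rho)$, which is $O(1)$ and hence negligible after multiplication by $\rho^2$.
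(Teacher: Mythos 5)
Your proof is correct. Both you and the paper reduce the problem to the exact evaluation $\int_0^\infty \Phi(-\rho\sqrt{x})\,dx = \tfrac{1}{2\rho^2}$ (via the same integration by parts against the Gaussian second moment) and then show the sum differs from this integral by an $O(1)$ quantity, which dies after multiplication by $\rho^2$. Where you differ is in how that $O(1)$ discrepancy is controlled: the paper invokes its Euler--Maclaurin lemma (Lemma \ref{emf}) with $f(x)=\Phi(-\rho\sqrt{x})$ and bounds the correction term $\tfrac{\rho}{2}\int_0^\infty x^{-1/2}\Phi'(-\rho\sqrt{x})\psi(x)\,dx$ by $\tfrac12$ using $|\psi|\leq\tfrac12$, whereas you use the elementary integral test for the monotone decreasing function $x\mapsto\Phi(-\rho\sqrt{x})$, sandwiching the sum between $\int_0^\infty$ and $\Phi(0)+\int_0^\infty$. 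Your route is simpler and sidesteps a small technical wrinkle in the paper's application of Euler--Maclaurin (the derivative of $\Phi(-\rho\sqrt{x})$ is not continuous at $x=0$, an issue you correctly flag and would handle by separating the $n=0$ term); the paper's route has the advantage of being the same machinery it reuses implicitly for finer expansions, but for this lemma nothing beyond your monotone comparison is needed.
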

\begin{proof}
Let $\rho>0$ be small. It follows from Lemma~\ref{normal} that the improper integrals
\[
\int^\infty_0 \Phi(-\rho\sqrt{x})dx \ \ \ \ \ \text{and}\ \ \ \  \int^\infty_0 x^{-1/2}\Phi^\prime(-\rho\sqrt{x})\psi(x)dx
\]
are convergent as $|\psi(x)| \leq 1/2$.
Since $\Phi(\cdot)$ has continuous derivative on $\mathbb{R}$ and $\Phi(-y) \to 0$ as $y\to \infty$,
applying $f(x) = \Phi^\prime(-\rho\sqrt{x})$ to Lemma \ref{emf}, we have
\begin{equation}\label{EMfor}
\sum_{n =0}^{\infty} \Phi\left(-\rho\sqrt{n}\right) = \int^\infty_0\Phi\left(-\rho\sqrt{x}\right)dx -\frac{\rho}{2} \int^\infty_0 x^{-1/2}\Phi^\prime\left(-\rho\sqrt{x}\right)\psi(x)dx+ \frac{1}{2} .
\end{equation}
From Lemma~\ref{normal}, we see that $x \cdot \Phi(-\rho\sqrt{x}) \to 0$ as $x \to \infty$.
Applying the integral by path formula to the first term in the right-hand side of (\ref{EMfor}), we obtain
\begin{align*}
\int^\infty_0\Phi\left(-\rho\sqrt{x}\right)dx &=0+ \int^\infty_0\Phi^\prime\left(-\rho\sqrt{x}\right)\cdot\frac{\rho\sqrt{x}}{2}dx\\
&=\frac{1}{2\sqrt{2\pi}}\int^\infty_0 e^{-\frac{\rho^2x}{2}}\rho\sqrt{x} dx = \frac{1}{\rho^2} \cdot \frac{1}{\sqrt{2\pi}} \int^\infty_0 t^2e^{-t^2/2}dt= \frac{1}{2\rho^{2}}.
\end{align*}
For the second term in the right-hand side of (\ref{EMfor}), note that $|\psi(x)| \leq 1/2$, we have
\[
\left|\frac{\rho}{2}\int^\infty_0x^{-1/2}\Phi^\prime\left(-\rho\sqrt{x}\right)\psi(x)dx \right| \leq \int^\infty_0\Phi^\prime\left(-\rho\sqrt{x}\right) d(\rho\sqrt{x}) =\frac{1}{2}.
\]
Therefore,
\[
\rho^{-2}/2 \leq \sum_{n =0}^{\infty} \Phi\left(-\rho\sqrt{n}\right) \leq \frac{1}{2\rho^{2}} +1.
\]
Multiplying $\rho^2$ and letting $\rho\to 0$ yields the desired equation.
\end{proof}

\medskip

%
%


\begin{lemma}\label{lem:3}
Let $K>0$ be fixed. Then
\begin{equation*}
\lim_{\rho \to 0^+}\ \rho^2 \cdot \sum_{n \geq K/\rho^2} \Phi\left(-\rho\sqrt{n}\right) = 0.
\end{equation*}
\end{lemma}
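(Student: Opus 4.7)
The plan is to imitate the Euler--Maclaurin computation of Lemma~\ref{lem:1}, restricted now to the tail of the series past $N := \lceil K/\rho^2 \rceil$. I would apply Lemma~\ref{emf} to $f(x) = \Phi(-\rho\sqrt x)$ on $[N,\infty)$; this is legitimate because $f\in C^1$, $f(x)\to 0$ at infinity by Lemma~\ref{normal}, and both improper integrals appearing in the formula converge. The formula decomposes $\sum_{n\ge N}\Phi(-\rho\sqrt n)$ as a principal integral $\int_N^\infty\Phi(-\rho\sqrt x)\,dx$, an Euler correction $-(\rho/(2\sqrt{2\pi}))\int_N^\infty (e^{-\rho^2 x/2}/\sqrt x)\,\psi(x)\,dx$, and a boundary half-term $\Phi(-\rho\sqrt N)/2$.

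For the principal integral I substitute $v=\rho\sqrt x$, under which $x=N$ corresponds to $v=\rho\sqrt N\to\sqrt K$ as $\rho\to 0^+$; this gives $\int_N^\infty\Phi(-\rho\sqrt x)\,dx=(2/\rho^2)\int_{\rho\sqrt N}^\infty v\,\Phi(-v)\,dv$, so that multiplication by $\rho^2$ produces the limit $2\int_{\sqrt K}^\infty v\,\Phi(-v)\,dv$. Using $|\psi|\le 1/2$ and the same substitution together with Lemma~\ref{normal}, the Euler correction contributes at most $O(\rho\,\Phi(-\sqrt K))$ after the $\rho^2$ factor is applied, which vanishes; the boundary half-term contributes $O(\rho^2)$ and likewise vanishes.

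The outcome of this plan is therefore the identity
\[
\lim_{\rho\to 0^+}\rho^2\sum_{n\ge K/\rho^2}\Phi(-\rho\sqrt n)=2\int_{\sqrt K}^\infty v\,\Phi(-v)\,dv,
\]
and a further application of Lemma~\ref{normal} bounds the right-hand side by $\Phi(-\sqrt K)$, which tends to $0$ as $K\to\infty$. The principal obstacle of the plan is precisely this gap: the Euler--Maclaurin computation produces a strictly positive limit for each fixed $K$, not $0$, so the asserted equality must be interpreted as a uniform smallness statement, valid after the auxiliary parameter $K$ is ultimately sent to infinity in the downstream application within the proof of Theorem~\ref{theo:main}. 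Any honest execution of the plan therefore records the limit as $2\int_{\sqrt K}^\infty v\,\Phi(-v)\,dv$ and then absorbs the $K\to\infty$ limit into the ambient proof.
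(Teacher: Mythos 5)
Your computation is correct, and it exposes a genuine problem with the lemma as stated: for fixed $K>0$ the limit is \emph{not} $0$. The quantity $\rho^2\sum_{n\geq K/\rho^2}\Phi(-\rho\sqrt n)$ is a Riemann sum with mesh $\rho^2$ for $\int_K^\infty\Phi(-\sqrt t)\,dt=2\int_{\sqrt K}^\infty v\,\Phi(-v)\,dv>0$, which is exactly the value your Euler--Maclaurin argument produces; and since $x\mapsto\Phi(-\rho\sqrt x)$ is decreasing, the sum is bounded \emph{below} by $\rho^2\int_{\lceil K/\rho^2\rceil}^\infty\Phi(-\rho\sqrt x)\,dx\to 2\int_{\sqrt K}^\infty v\,\Phi(-v)\,dv$, so no upper-bound trick can rescue the statement. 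The paper's own proof goes wrong at the change of variables: under $y=\rho\sqrt x$ one has
\[
\int_{K/\rho^2}^\infty\frac{\rho}{\sqrt x}\,e^{-\rho^2x/2}\,dx=2\int_{\sqrt K}^\infty e^{-y^2/2}\,dy,
\]
a constant independent of $\rho$, not $\tfrac{\rho}{2}\int_K^\infty y^{-1/2}e^{-y^2/2}\,dy$; the spurious factor of $\rho$ is what makes the paper's bound appear to vanish.

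Your proposed repair is also the right one, and it is all that the proof of Theorem~\ref{theo:main} actually needs, namely
\[
\limsup_{K\to\infty}\ \limsup_{\rho\to0^+}\ \rho^2\sum_{n\geq K/\rho^2}\Phi\left(-\rho\sqrt n\right)
\leq\lim_{K\to\infty}2\int_{\sqrt K}^\infty v\,\Phi(-v)\,dv=0,
\]
where the inner bound follows from your computation and the final limit from Lemma~\ref{normal} (which gives $2\int_{\sqrt K}^\infty v\,\Phi(-v)\,dv\leq 2\Phi(-\sqrt K)$; note the constant is $2$, not $1$ as you wrote, but this is immaterial). For this weaker statement the full Euler--Maclaurin machinery is not even necessary: the monotonicity bound $\rho^2\sum_{n\geq K/\rho^2}\Phi(-\rho\sqrt n)\leq\rho^2\,\Phi(-\sqrt K)+\rho^2\int_{K/\rho^2}^\infty\Phi(-\rho\sqrt x)\,dx$ already suffices. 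The lemma should therefore be restated either with the limit replaced by the explicit value $2\int_{\sqrt K}^\infty v\,\Phi(-v)\,dv$, or directly as the iterated-limit assertion above; the downstream results of the paper are unaffected.
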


\begin{proof}
We derive from Lemma~\ref{normal} that
\[
\Phi\left(-\rho\sqrt{n}\right) \leq \frac{1}{\sqrt{2\pi}}\cdot\frac{1}{\rho\sqrt{n}}\cdot e^{(-\rho^2n)/2}
\]
and then
\[
\rho^2 \cdot \sum_{n \geq K/\rho^2} \Phi\left(-\rho\sqrt{n}\right) \leq \frac{1}{\sqrt{2\pi}}  \sum_{n \geq K/\rho^2} \frac{\rho}{\sqrt{n}}\cdot e^{(-\rho^2n)/2}.
\]
Note that
\[
 \int^\infty_K\frac{1}{\sqrt{y}}e^{-y^2/2}dy<\infty
\]
and
\[
\sum_{n \geq K/\rho^2} \frac{\rho}{\sqrt{n}}\cdot e^{(-\rho^2n)/2} \leq C_K \int^\infty_{K/\rho^2}\frac{\rho}{\sqrt{x}}\cdot e^{(-\rho^2x)/2} dx= \frac{C_K}{2}\cdot \rho\cdot \int^\infty_K\frac{1}{\sqrt{y}}e^{-y^2/2}dy,
\]
where $C_K>0$ is a constant depending on $K$. Hence we obtain
\[
\rho^2 \cdot \sum_{n \geq K/\rho^2} \Phi\left(-\rho\sqrt{n}\right) \leq \rho \cdot \frac{C_K}{2\sqrt{2\pi}}\int^\infty_K\frac{1}{\sqrt{y}}e^{-y^2/2}dy.
\]
Putting $\rho \to 0^+$, the desired result follows.
\end{proof}

To complete the proof of
Theorem~\ref{theo:main}, it suffices to show that
\[
\lim_{\varepsilon \to 0} \varepsilon^2  \Lambda^{+}(\varepsilon) = \frac{\sigma^2}{2}\ \ \ \ \text{and}\ \ \ \ \lim_{\varepsilon \to 0} \varepsilon^2  \Lambda^{-}(\varepsilon) = \frac{\sigma^2}{2}.
\]

In what follows, we only prove the first equation since the second one can be obtained by means of similar arguments. To this end, we write
\begin{equation}\label{divide}
\varepsilon^2 \Lambda^{+}(\varepsilon)
= \varepsilon^2  \sum_{n =1}^{\infty} \Big(\Lambda^{+}_n(\varepsilon) - \Phi\left(-\varepsilon\sqrt{n}/\sigma\right)\Big) +\varepsilon^2 \sum_{n =1}^{\infty} \Phi\left(-\varepsilon\sqrt{n}/\sigma\right).
\end{equation}
From Lemma \ref{lem:1}, it then follows that the second term on the right-hand side of (\ref{divide}) converges to $\sigma^2/2$ as $\varepsilon$ goes to zero.
So we only need to prove that the first term on the right-hand side of (\ref{divide}) tends to zero as $\varepsilon$ goes to zero. We first treat
\[
\varepsilon^2  \sum_{n =1}^{K(\varepsilon)} \Big(\Lambda^{+}_n(\varepsilon) - \Phi\left(-\varepsilon\sqrt{n}/\sigma\right)\Big),
\]
where $K>8$ is an integer and $K(\varepsilon):= \lfloor K/\varepsilon^2\rfloor$.
By {\bf CLT}, we can put
\begin{equation}\label{delta}
\Delta_n = \sup_{y \in \mathbb{R}} \left| m\left\{x \in X: \frac{(S_nf)(x)}{\sigma \sqrt{n}} \leq y \right\}- \Phi(y)\right|,
\end{equation}
and it follows from Lemma~\ref{lem:2} that $\Delta_n \to 0$ as $n \to \infty$. Combining this with the definition of $K(\varepsilon)$, we see that
\begin{equation}\label{tend1}
\limsup_{\varepsilon \to 0}\varepsilon^2  \sum_{n=1}^{K(\varepsilon)} \Big|\Lambda^{+}_n(\varepsilon) - \Phi\left(-\varepsilon\sqrt{n}/\sigma\right)\Big| \leq \limsup_{\varepsilon \to 0}\varepsilon^2  \sum_{n=1}^{K(\varepsilon)}\Delta_n=0. 
\end{equation}

We are now in a position to show
\[
\lim_{K \to \infty}\limsup_{\varepsilon \searrow 0}\ \varepsilon^2  \sum_{n >K(\varepsilon)} \Big|\Lambda^{+}_n(\varepsilon) - \Phi\left(-\varepsilon\sqrt{n}/\sigma\right)\Big|=0.
\]
From
Lemma~\ref{lem:3}, we have
\[
\limsup_{K \to \infty}\limsup_{\varepsilon \searrow 0}\ \varepsilon^2  \sum_{n >K(\varepsilon)} \Phi\left(-\varepsilon\sqrt{n}/\sigma\right) = 0.
\]
It remains to prove
\begin{equation}\label{hard}
\limsup_{K \to \infty}\limsup_{\varepsilon \searrow 0}\ \varepsilon^2 \sum_{n >K(\varepsilon)} \Lambda^{+}_n(\varepsilon) =0.
\end{equation}
For $0<\varepsilon<1$ and $K>M$, we have $K/\varepsilon^2 >M/\varepsilon$. Let $n > M/\varepsilon$ be fixed, then $M/n <\varepsilon$.
It follows from {\bf LD} that $
\Lambda^{+}_n(\varepsilon) \leq   C e^{-I(\varepsilon) n},$
which implies that
\begin{equation}\label{lambda2}
\varepsilon^2 \sum_{n >K(\varepsilon)} \Lambda^{+}_n(\varepsilon)  \leq C \cdot \frac{\varepsilon^2}{1-e^{-I(\varepsilon)}} \cdot e^{-K\cdot I(\varepsilon)/\varepsilon^2}.
\end{equation}
 {\bf LD} gives
$
I(\varepsilon) \to 0$, $I^\prime(\varepsilon) \to 0,$ $I^{\prime\prime}(\varepsilon)\to I^{\prime\prime}(0)>0$
as $\varepsilon\to0$.
Hence
\begin{equation}
\label{eq-second}
\lim_{\varepsilon \to 0}\frac{1-e^{-I(\varepsilon)}}{\varepsilon^2}  = \lim_{\varepsilon \to 0} \frac{I(\varepsilon)}{\varepsilon^2} =   \frac{I^{\prime\prime}(0)}{2}>0.
\end{equation}
Combining \eqref{lambda2} and \eqref{eq-second}, we deduce that
\[
\limsup_{K \to \infty}\limsup_{\varepsilon \to 0}\ \varepsilon^2 \sum_{n >K(\varepsilon)} \Lambda^{+}_n(\varepsilon) \leq C \cdot \limsup_{K \to \infty}e^{-K \cdot\frac{I^{\prime\prime}(0)}{2}}=0.
\]
This completes the proof of Theorem~\ref{theo:main}.
\qed

\subsection{Proof of Theorem \ref{theo2:main}}
It suffices to show that
\begin{equation}\label{eq-1.4}
\lim_{\varepsilon \to 0}\frac{1}{-\log \varepsilon}\sum_{n \geq 1} \frac{\Lambda^{+}_n(\varepsilon)}{n}=1.
\end{equation}
Split
\begin{align}\label{L+}
\sum_{n \geq 1} \frac{\Lambda^{+}_n(\varepsilon)}{n} = I(\varepsilon) + I\!I(\varepsilon) -I\!I\!I(\varepsilon) +IV(\varepsilon),
\end{align}
where
\begin{align*}
I(\varepsilon)&:=\sum^{L(\varepsilon)}_{n = 1} \frac{1}{n} \big(\Lambda^{+}_n(\varepsilon)  - \Phi\left(-\varepsilon\sqrt{n}/\sigma\right) \big);\\
I\!I(\varepsilon)&:=\sum_{n >L(\varepsilon)} \frac{\Lambda^{+}_n(\varepsilon)}{n};\\
I\!I\!I(\varepsilon)&:=\sum_{n >L(\varepsilon)} \frac{\Phi\left(-\varepsilon\sqrt{n}/\sigma\right)}{n},
\end{align*}
with $L(\varepsilon) = \lfloor \varepsilon^{-2}\rfloor$, and
\begin{align*}
IV(\varepsilon)&:=\sum_{n \geq 1} \frac{\Phi\left(-\varepsilon\sqrt{n}/\sigma\right)}{n}.
\end{align*}
In what follows, we will deal with
these four terms one by one. The condition {\bf CLT} will only be used for an estimation of $I(\varepsilon)$ and {\bf LD} will only be used for an estimate of $I\!I(\varepsilon)$. To be more specific,

For $I(\varepsilon)$,  recall that $\Delta_n$ was given in \eqref{delta}, and Lemma~\ref{lem:2} yields
$\Delta_n \to 0$ as $n \to \infty$. Thus
\[
\lim_{n \to \infty}\frac{1}{\log n}\sum^n_{k=1} \frac{\Delta_k}{k} =0.
\]
So we have
\begin{align}\label{I}
\limsup_{\varepsilon \to 0}\frac{\big|I(\varepsilon)\big|}{-\log \varepsilon} &\leq \limsup_{\varepsilon \to 0}\frac{1}{-\log \varepsilon} \sum^{L(\varepsilon)}_{n = 1} \frac{\Delta_n}{n} \\
& = \limsup_{\varepsilon \to 0}\frac{\log L(\varepsilon)}{-\log \varepsilon} \cdot \limsup_{\varepsilon \to 0}\frac{1}{\log L(\varepsilon)} \sum^{L(\varepsilon)}_{n = 1} \frac{\Delta_n}{n}=0\qedhere.
\end{align}
This means
\[
\lim_{\varepsilon \to 0}\frac{\big|I(\varepsilon)\big|}{-\log \varepsilon} =0.
\]
For $I\!I(\varepsilon)$, let $0<\varepsilon<\min\{\delta,1,M^{-1}\}$. For any $n > L(\varepsilon)$,
it follows from {\bf LD} that
\[
\Lambda^{+}_n(\varepsilon) \leq   C e^{-I(\varepsilon) n}< \frac{C}{I(\varepsilon)n},
\]
which implies that
\begin{align*}
\limsup_{\varepsilon\to0}I\!I(\varepsilon) &=\limsup_{\varepsilon\to0}\sum_{n >L(\varepsilon)} \frac{\Lambda^{+}_n(\varepsilon)}{n}\\
&\leq \limsup_{\varepsilon\to0}\frac{C}{I(\varepsilon)}\sum_{n >L(\varepsilon)}\frac{1}{n^2} \leq\limsup_{\varepsilon\to0} \frac{C}{I(\varepsilon)   L(\varepsilon)}\leq \frac{2C}{I^{\prime\prime}(0)}.
\end{align*}
The last inequality is deduced from \eqref{eq-second}. Hence
\[
\lim_{\varepsilon \to 0}\frac{I\!I(\varepsilon)}{-\log \varepsilon} =0.
\]
For $I\!I\!I(\varepsilon)$, let $0<\varepsilon<1/2$ and put $\rho=\varepsilon/\sigma$. The upper bound in Lemma~\ref{normal} gives
\[
\Phi\left(-\rho\sqrt{n}\right) \leq  \frac{1}{\rho\sqrt{n}} e^{-\frac{\rho^2 n}{2}} \leq \frac{1}{\sqrt{2\pi}}\cdot\frac{2}{\rho^3 n\sqrt{n}},
\]
and hence
\begin{align*}
I\!I\!I(\varepsilon) &= \sum_{n >L(\varepsilon)} \frac{\Phi\left(-\rho\sqrt{n}\right)}{n} \leq \frac{2}{\rho^3 \sqrt{2\pi}}\cdot\frac{1}{\sqrt{L(\varepsilon)+1}} \sum_{n >L(\varepsilon)} \frac{1}{n^2}\\
&=\frac{2}{\rho^3 \sqrt{2\pi}}\cdot\frac{1}{L(\varepsilon)\sqrt{L(\varepsilon)+1}} < \frac{2}{\rho^3 \sqrt{2\pi}}\cdot 2\varepsilon^3 = \frac{4\sigma^3}{\sqrt{2\pi}}.
\end{align*}
Then we obtain
\[
\lim_{\varepsilon \to 0}\frac{I\!I\!I(\varepsilon)}{-\log \varepsilon} =0.
\]
For $IV(\varepsilon)$, it follows from \cite[Proposition~1]{Spataru} that
\[
\lim_{\varepsilon \to 0}\frac{IV(\varepsilon)}{-\log \varepsilon} =1.
\]

Combining these four estimates on $I(\varepsilon)-IV(\varepsilon)$ and \eqref{L+}, we eventually obtain \eqref{eq-1.4}.
This completes the proof of Theorem~\ref{theo2:main}.\qed

\section{Expanding Markov interval maps and continued fractions}\label{sec:expandingmarkov}
In this section we give applications of Theorem~\ref{theo:main} and
Theorem~\ref{theo2:main}.
\subsection{Expanding Markov interval maps}\label{exp-m}
Let $S$ be a countable set and let $m$ be the Lebesgue measure on $[0,1]$.
   {\it An expanding Markov interval map} is a map $T\colon\bigcup_{a\in S}\varDelta_a\to [0,1]$ such that the following holds:

 \begin{itemize}

\item[(a)]
   $\{\varDelta_a\}_{a\in S}$ is a family
of subintervals of $[0,1]$ with pairwise disjoint interiors
such that $m\left([0,1]\setminus\bigcup_{a\in S}\varDelta_a\right)=0$.

 \item[(b)] For each $a\in S$, $T|_{\varDelta_a}$ is a $C^2$ diffeomorphism onto its image
 with bounded derivatives.

 \item[(c)]  $T\varDelta_a\supset\varDelta_b$
 holds for all $a\in S$, $b\in S$.

\item[(d)] There exist an integer $p\geq1$ and a constant $\lambda>1$ such that
$$\inf_{a\in S}\inf_{x\in\varDelta_a}|(T^p)'x|\geq\lambda.$$

\item[(e)] (R\'enyi's condition) $$\sup_{a\in S}\sup_{x\in\varDelta_a}\frac{|T''x|}{|T'x|^2}<\infty.$$

 \end{itemize}
 An expanding Markov map $T$
 is said to be {\it with finitely many branches}
 if $S$ is a finite set.
 Otherwise it is said to be
{\it with infinitely many branches}.

It is known as a folklore theorem originating in the 1950s that expanding Markov interval maps admit a unique invariant probability measure $\nu$ that is absolutely continuous with respect to $m$, see for example \cite{Ren57}. Moreover, $\nu$ is ergodic.


Let $T$ be an expanding Markov interval map with finitely many branches.
From the result of Chazotttes and Collet  \cite[Lemma~A.1]{ChaCol05}, {\bf LD} holds
for a H\"older continuous observable $f$ with $\int fd\nu=0$
under the assumption of  {\bf CLT}.
It is well-known that $\sigma>0$ holds if and only if
the cohomological equation $f=\psi\circ T-\psi+\int fd\nu$ has no solution in $L^2(\nu)$.
Since $f$ is H\"older continuous,
by the Livsi\v{c} theorem \cite{Liv71,Liv72},
any solution of the cohomological equation
in $L^2(\nu)$ has a version which is H\"older continuous. It follows that
$\sigma=0$ holds if and only if $f$ is cohomologous to a constant.
In the case $f=\log|T'|$,
 $\sigma=0$ holds if and only if
$\nu$ is the measure of maximal entropy \cite{Bow75}.

For maps with infinitely many branches we have the following result.
\begin{theorem}\label{gauss}
Let $T$ be an expanding Markov interval map with infinitely many branches, and let $\nu$ be the $T$-invariant probability measure
that is ergodic and absolutely continuous with respect to $m$.
Assume $\int\log|T'|d\nu<\infty$.
Then
$$\lim_{\varepsilon\to0}\varepsilon^2\sum_{n=1}^\infty\Lambda_n(\varepsilon)=\sigma^2\quad\text{and}\quad
\lim_{\varepsilon\to0}\frac{1}{-\log\varepsilon}\sum_{n=1}^\infty\frac{\Lambda_n(\varepsilon)}{n}=2,$$
where $$\Lambda_n=\left\{x\in(0,1)\colon\left|\frac{1}{n}\log|(T^n)'(x)|-\int\log|T'|d\nu\right|\geq\varepsilon\right\}.$$
\end{theorem}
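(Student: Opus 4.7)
The plan is to apply Theorems \ref{theo:main} and \ref{theo2:main} to the measure-preserving system $((0,1), T, \nu)$ with the observable $f(x) = \log|T'(x)| - \int \log|T'|\,d\nu$. By construction $\int f\,d\nu = 0$, and the chain rule gives $S_n f(x) = \log|(T^n)'(x)| - n\int \log|T'|\,d\nu$, so the sets defining $\Lambda_n^+(\varepsilon)$ and $\Lambda_n^-(\varepsilon)$ for this $f$ together make up exactly the set $\Lambda_n$ appearing in the statement. Hence the two conclusions of the theorem reduce to \eqref{equ:Heyderesult} and to the conclusion of Theorem \ref{theo2:main} for this particular choice of $f$.

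To verify hypothesis \textbf{CLT}, I would invoke Morita's central limit theorem \cite{Mor94} for expanding Markov interval maps with infinitely many branches. Under R\'enyi's condition together with the integrability hypothesis $\int \log|T'|\,d\nu < \infty$, the observable $f$ lies in the function class for which Morita's theorem produces a Gaussian limit with variance $\sigma^2 < \infty$. For the positivity of $\sigma$, I would apply the Aaronson--Denker Livsi\v{c} theorem \cite{AD01}: if $\sigma = 0$ then $f$ would be cohomologous to a constant, forcing $\log|T'|$ to be cohomologous to a constant on periodic orbits, which is incompatible with the existence of multiple branches having distinct multipliers at their fixed points.

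The more substantial step is the verification of hypothesis \textbf{LD}. For the Gauss map this was accomplished in \cite{Tak20}, where the uniform non-asymptotic bound $\Lambda_n^\pm(\varepsilon) \leq C e^{-I(\pm\varepsilon)n}$, valid for all $n \geq 1$ and $\varepsilon > M/n$, was established via transfer-operator techniques, producing a $C^2$ rate function $I$ on a neighborhood of $0$ with $I(0) = I'(0) = 0$ and $I''(0) > 0$. The same argument, based on the spectral gap of the tilted transfer operator on a suitable space of bounded-variation or Lipschitz densities, analyticity of the leading eigenvalue in the tilting parameter, and control of the countable-branch tail through R\'enyi's condition, extends to arbitrary expanding Markov interval maps with infinitely many branches under the given integrability hypothesis.

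The main obstacle is precisely this uniform large-deviation bound: the standard level-$1$ large-deviation principles for countable Markov systems are only asymptotic as $n \to \infty$, whereas \textbf{LD} demands explicit constants $C$ and $M$ so that the estimate holds for \emph{every} pair $(n,\varepsilon)$ with $\varepsilon > M/n$. Securing these uniform constants requires the full thermodynamic-formalism package (spectral gap, analytic perturbation of the leading eigenvalue, and distortion-based tail control). Once both hypotheses have been checked, Theorems \ref{theo:main} and \ref{theo2:main} apply verbatim and yield the two precise asymptotics stated in the theorem.
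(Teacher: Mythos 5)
Your proposal follows essentially the same route as the paper: take $f=\log|T'|-\int\log|T'|\,d\nu$, obtain \textbf{CLT} from Morita \cite{Mor94} or Aaronson--Denker \cite{AD01}, obtain \textbf{LD} by the argument used for the Gauss map in \cite{Tak20}, and then invoke Theorems \ref{theo:main} and \ref{theo2:main}.

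One caveat: you attribute to \cite{Tak20} a rate function that already comes with $I''(0)>0$. In fact the Main Theorem of \cite{Tak20} supplies the non-asymptotic bounds and a $C^2$ (indeed analytic) rate function with $I(0)=I'(0)=0$, but the strict positivity of $I''(0)$ is not part of that statement; the paper has to prove it separately. It does so by writing $I(\varepsilon)=(\varepsilon+2\gamma)\bigl(1-b(\varepsilon+2\gamma)\bigr)$ in terms of the Lyapunov (dimension) spectrum $b$, expressing $b$ through the Legendre transform of the pressure function $P$, and using $\beta'(\alpha)=-1/P''(\beta(\alpha))<0$ from the implicit function theorem to get $b''(2\gamma)<0$, hence $I''(0)=-2b''(2\gamma)\gamma>0$. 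Your sketch would need this (or an equivalent nondegeneracy argument for the second derivative of the leading eigenvalue of the tilted transfer operator) to complete the verification of \textbf{LD}; otherwise the approach and all other ingredients match the paper's proof.
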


\begin{proof}
The {\bf CLT} for $f=\log|T'|-\int\log|T'|d\nu$ holds as a consequence of the result of Morita \cite[Theorem~4.1]{Mor94}, or Aaronson and Denker \cite[Corollary~2.3]{AD01}.
The argument in the proof of
Theorem~\ref{qn1} below to show the {\bf LD}
for the Gauss map
works verbatim to show
 {\bf LD} in this general setting.
Hence, Theorems~\ref{theo:main} and \ref{theo2:main}
yield the desired equalities.
\end{proof}

\subsection{The Gauss map and continued fractions}
An interesting example of an expanding Markov interval map with infinitely many branches is the Gauss map $$G\colon x\in(0,1]\mapsto \frac{1}{x}-\left\lfloor\frac{1}{x}\right\rfloor\in[0,1).$$
Each $x \in (0,1)$ admits a \emph{continued fraction expansion} of the form
\begin{equation}\label{CF}
x =  \dfrac{1}{a_1(x) +\dfrac{1}{a_2(x) +\ddots}}:=[a_1(x),a_2(x),\cdots],
\end{equation}
where $a_n(x)$ are positive integers.
Such a representation of $x$ can be generated by the Gauss map $G$, in the sense that
$a_1(x) = \lfloor1/x \rfloor$ and $a_{n +1}(x) = a_1(G^n(x))$ for all $n \geq 1$. For any $x \in (0,1)$, its continued fraction expansion is finite (i.e., there exists $k \geq 1$ such that $G^k(x) =0$) if and only if $x$ is rational. For any irrational number $x \in (0,1)$, we denote by $$\frac{p_n(x)}{q_n(x)}= [a_1(x), a_2(x), \cdots, a_n(x)]$$ the $n$th \emph{convergent} of $x$, with $n\geq 1$ and $p_n(x)$ and $q_n(x)$ are relatively prime. These convergents are rational numbers and give the best approximations to $x$ among all the rational approximations with denominator up to $q_n$. Moreover, it is well known that
\begin{equation}\label{diophantine}
\frac{1}{2q_{n+1}^2(x)} \leq \left|x-\frac{p_n(x)}{q_n(x)}\right| \leq \frac{1}{q_n^2(x)}.
\end{equation}
In other words, the order of $q_n^{-2}(x)$ dominates the speed of $p_n(x)/q_n(x)$ approximation.
The result of L\'{e}vy \cite{Levy} states that
\begin{equation}\label{equ:levy}
\lim\limits_{n \to \infty}\frac{\log q_n(x)}{n} = \frac{\pi^2}{12\log2}=:\gamma\quad \text{ $m$-a.e. $x\in(0,1)$.}
\end{equation}

We obtain precise asymptotics on $q_{n}$ beyond \eqref{equ:levy}.
For $\varepsilon > 0$ and $n \geq 1$, put
\[
\Gamma_n(\varepsilon)= m\left\{x \in (0,1):\left|\frac{\log q_n(x)}{n}- \gamma\right| \geq \varepsilon\right\}.
\]

\begin{theorem}\label{qn1}
We have
\begin{equation*}\label{equ:qn1}
\lim_{\varepsilon \to 0 }\ \varepsilon^2  \sum_{n=1}^\infty \Gamma_n(\varepsilon) =\sigma^2
\quad\text{and}\quad
\lim_{\varepsilon\to 0}\frac{1}{-\log\varepsilon}\sum_{n=1}^\infty\frac{\Gamma_n(\varepsilon)}{n}=2.
\end{equation*}
\end{theorem}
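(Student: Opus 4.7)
The plan is to apply Theorems~\ref{theo:main} and~\ref{theo2:main} to the sequence $Y_n(x):=\log q_n(x)-n\gamma$, which is not itself a Birkhoff sum but differs from one by a uniform $O(1)$. Inspecting the proofs of those theorems, one sees that they use nothing about $S_nf$ beyond the two hypotheses \textbf{CLT} and \textbf{LD}, together with the elementary lemmas (Euler--Maclaurin, P\'olya, Gaussian tail); the Birkhoff structure itself never enters. Hence, once \textbf{CLT} and \textbf{LD} are established for $Y_n$, the same arguments deliver
\[
\lim_{\varepsilon\to 0}\varepsilon^2\sum_{n\ge 1}m\{|Y_n/n|\ge\varepsilon\}=\sigma^2,\qquad \lim_{\varepsilon\to 0}\frac{1}{-\log\varepsilon}\sum_{n\ge 1}\frac{m\{|Y_n/n|\ge\varepsilon\}}{n}=2,
\]
which is exactly Theorem~\ref{qn1}.

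The bridge to the Gauss map is the classical identity $|(G^n)'(x)|=(q_n(x)+q_{n-1}(x)G^nx)^2$ relating the derivative of the $n$-th iterate to the convergents. Since $q_{n-1}\le q_n$ and $0\le G^nx<1$ it yields $\bigl|\tfrac12\log|(G^n)'(x)|-\log q_n(x)\bigr|\le\log 2$; so with the observable $\phi(x)=\tfrac12\log|G'(x)|-\gamma=-\log x-\gamma$ (of $\nu$-mean zero because $\int\log|G'|\,d\nu=2\gamma$) one has $|Y_n(x)-S_n\phi(x)|\le\log 2$ for every $x\in(0,1)\setminus\mathbb{Q}$. Such uniform closeness makes the transfer of both hypotheses from $S_n\phi$ to $Y_n$ routine.

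For \textbf{CLT}, Morita~\cite{Mor94} (or Aaronson--Denker~\cite{AD01}) provides $S_n\phi/(\sigma\sqrt{n})\Rightarrow\mathcal{N}(0,1)$ for some $\sigma>0$, and since $|Y_n-S_n\phi|/\sqrt{n}\to 0$ uniformly, Slutsky's lemma yields the same weak limit for $Y_n/(\sigma\sqrt{n})$. For \textbf{LD}, \cite{Tak20} establishes $m\{|S_n\phi/n|\ge\varepsilon\}\le Ce^{-I(\varepsilon)n}$ for $\varepsilon\in(0,\delta)$ and $n\ge M/\varepsilon$, with $I$ the Legendre transform of the topological pressure $t\mapsto P(-t\phi)$, a $C^2$ function near $0$ satisfying $P(0)=P'(0)=0$ and $P''(0)=\sigma^2>0$. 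The pointwise closeness forces $\{|Y_n/n|\ge\varepsilon\}\subseteq\{|S_n\phi/n|\ge\varepsilon-\log 2/n\}$, and Taylor expansion of $I$ at $0$ gives $nI(\varepsilon-\log 2/n)\ge nI(\varepsilon)-K$ for a constant $K$ depending only on $\delta$ once $n\ge 2\log 2/\varepsilon$; absorbing $e^K$ into $C$ supplies \textbf{LD} for $Y_n$ with the same rate function.

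The main obstacle is \textbf{LD} itself. Because $G$ has infinitely many branches accumulating at $0$, the standard spectral theory of the transfer operator on $BV$ or H\"older spaces does not apply off the shelf, and one must analyze the pressure in a Banach space tailored to the infinite alphabet; this is the substantive input of~\cite{Tak20}. Everything else---the continued-fraction identity, the Slutsky- and Taylor-type transfers of \textbf{CLT} and \textbf{LD} from $S_n\phi$ to $Y_n$, and the reapplication of Theorems~\ref{theo:main} and~\ref{theo2:main}---is routine.
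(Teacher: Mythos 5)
Your overall route is the same as the paper's: establish \textbf{CLT} and \textbf{LD} for the Gauss map with observable (essentially) $\log|G'|$, invoke Theorems~\ref{theo:main} and \ref{theo2:main}, and transfer from $\tfrac12\log|(G^n)'|$ to $\log q_n$ via the identity $|(G^n)'(x)|=(q_n(x)+q_{n-1}(x)G^nx)^2$, which gives the uniform additive bound $|\tfrac12\log|(G^n)'(x)|-\log q_n(x)|\le\log 2$. Your transfer of \textbf{LD} through the inclusion $\{|Y_n/n|\ge\varepsilon\}\subseteq\{|S_n\phi/n|\ge\varepsilon-\log 2/n\}$ plus a Taylor bound on $I$ is correct, and is in fact stated more carefully than the paper's one-line remark at the end of its proof.

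The one genuine gap is your treatment of $I''(0)>0$. You assert that \cite{Tak20} supplies a rate function that is the Legendre transform of a $C^2$ pressure with $P''(0)=\sigma^2>0$, so that nondegeneracy comes for free. The paper explicitly states the opposite: \cite[Main~Theorem]{Tak20} yields a rate function $I(\varepsilon)=(\varepsilon+2\gamma)(1-b(\varepsilon+2\gamma))$, written via the Lyapunov spectrum $b$, which ``satisfies all the hypotheses in \textbf{LD} but $I''(0)>0$,'' and the paper then devotes a separate lemma to proving $I''(0)>0$: it computes $I''(0)=-2b''(2\gamma)\gamma$, expresses $b$ through the pressure $P(\beta)=\sup\{h(\nu)-\beta\int\log|G'|\,d\nu\}$ and the solution $\beta(\alpha)$ of $P'(\beta)+\alpha=0$, and deduces $b''(2\gamma)=\beta'(2\gamma)/(2\gamma)<0$ from $\beta'=-1/P''\circ\beta$ and strict convexity of $P$ (from \cite{KesStr07,PolWei99}). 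Note also that for the Gauss map the unconstrained pressure of the zero potential is infinite (infinitely many branches), so your normalization ``$P(0)=P'(0)=0$'' does not literally make sense; the relevant facts are $P(1)=0$ and $P'(1)=-2\gamma$, and the variance identity $Q''(0)=\sigma^2$ for the recentred pressure is itself a nontrivial consequence of the spectral theory in the infinite-alphabet setting, not something you may simply cite from \cite{Tak20}. This nondegeneracy computation is the substantive content of the paper's proof that your proposal omits; the rest is as in the paper (the paper uses Misevi\v{c}ius for \textbf{CLT} of $\log q_n$ directly, while you use Morita/Aaronson--Denker plus Slutsky, an immaterial difference).
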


\begin{proof}
We view $G$ as a dynamical system acting on the set of irrational numbers in $(0,1)$.
Then $G$ leaves invariant the Gauss measure $$d\mu_G=\frac{1}{\log2}\cdot\frac{dx}{1+x}.$$
By L\'evy's theorem and the ergodic theorem,
$$\int\log |G'|d\mu_G=2\gamma.$$
We apply Theorems~\ref{theo:main} and \ref{theo2:main}
to $(G,\mu_G,\log |G'|)$.
{\bf CLT} was established by Misevi\u{c}ius \cite{Mis81}.
To verify {\bf LD} we
introduce the Lyapunov spectrum $\alpha\in[2\log((\sqrt{5}+1)/2),\infty)\mapsto
b(\alpha)\in[0,\infty)$ by
$$b(\alpha)=\dim_H\left\{x\in(0,1)\setminus\mathbb Q\colon\lim_{n\to\infty}\frac{1}{n}\log|(G^n)'(x)|=\alpha\right\},$$
where $\dim_H$ denotes the Hausdorff dimension on $[0,1]$.
The Lyapunov spectrum for the Gauss map was analyzed
by Kesseb\"ohmer and Stratmann \cite{KesStr07}, Pollicott and Weiss \cite{PolWei99}.
It was shown to be analytic, $b(\alpha)=0$ if and only if $\alpha=2\gamma$.
Using the Lyapunov spectrum, we
define $I\colon[2\log((\sqrt{5}+1)/2)-2\gamma,\infty)\to[0,\infty)$ by \begin{equation}\label{rate}I(\varepsilon)=
(\varepsilon+2\gamma)(1-b(\varepsilon+2\gamma)).\end{equation}
Then $I$ is $C^2$ (analytic) and $I(0)=0$, $I'(0)=0$.
 By these and \cite[Main~Theorem]{Tak20},
the function $I$ in \eqref{rate} satisfies all the hypotheses in {\bf LD} but
 $I''(0)>0$, which we now verify below.

\begin{lemma}
$I''(0)>0$.
\end{lemma}
\begin{proof}
A direct calculation gives
$I''(\varepsilon)=-2b'(\varepsilon+2\gamma)-b''(\varepsilon+2\gamma)(\varepsilon+2\gamma).$
Substituting $\varepsilon=0$ gives
\begin{equation}\label{der-1}
I''(0)=-2b''(2\gamma)\gamma.
\end{equation}
To evaluate $b''(2\gamma)$,
we introduce a pressure function $\beta\in(1/2,\infty)\mapsto P(\beta)$
by $$P(\beta)=\sup\left\{h(\nu)-\beta\int\log|G'|d\nu\colon\nu\in\mathcal M(G),\ \int\log|G'|d\nu<\infty\right\},$$
where $\mathcal M(G)$ denotes the set of $G$-invariant Borel probabiity measures.
The pressure function is convex and analytic
\cite{KesStr07,PolWei99}.
For each $\alpha>2\log((\sqrt{5}+1)/2)$,
let $\beta(\alpha)$ denote the solution of the equation $P'(\beta)+\alpha=0.$ We have
\begin{equation}\label{der-2}
b(\alpha)=\frac{1}{\alpha}(P(\beta(\alpha))+\alpha \beta(\alpha)).\end{equation}
Differentiating \eqref{der-2} twice gives
\begin{equation}\label{b-der}
    b''(\alpha)=-\frac{-\beta'(\alpha)\alpha^3-2P(\beta(\alpha))\alpha}{\alpha^4}.\end{equation}
    By the implicit function theorem
applied to the function $P'(\beta)+\alpha$, $\alpha\mapsto \beta(\alpha)$ is differentiable
and $\beta'(\alpha)=-1/P''(\beta(\alpha))<0$.
    Since $P(\beta(2\gamma))=0$, substituting $\alpha=2\gamma$ into \eqref{b-der} we obtain
    $$b''(2\gamma)=-\frac{-\beta'(2\gamma)}
    {2\gamma}<0,$$
    and therefore $I''(0)>0.$
\end{proof}

Since the Radon-Nikodym derivative $\frac{d\mu_G}{dm}$ is bounded from above and zero,
and $\log q_n(x)/\log|(G^n)'x|$ is uniformly bounded from above and zero over all $n$ and
$x$, Theorem~\ref{qn1} follows from Theorem~\ref{gauss}.
\end{proof}



$\mathbf{Acknowledgments}$. The authors would like to thank Shanghai Center for Mathematics Science, and the 2019 Fall Program of Low Dimensional Dynamics, where part of this work was written. L. Fang is supported by NSFC No. 11801591 and Science and Technology Program of Guangzhou No. 202002030369. H. Takahasi is supported by the JSPS KAKENHI
19K21835 and 20H01811. Y. Zhang is supported by NSFC Nos. 11701200, 11871262, and Hubei Key Laboratory of Engineering Modeling and Scientific Computing in HUST.


\begin{thebibliography}{10}
\bibitem{AD01} J. Aaronson, M. Denker, {\it Local limit theorems for partial sums of stationary sequences generated by Gibbs-Markov maps,} Stoch. and Dynam. {\bf 1} (2001), 193--237.
\bibitem{Apo} T. Apostol, {\it Mathematical Analysis}, Addison-Wesley Publishing Co., Reading, Mass.-London-Don Mills, Ont., 1974.

\bibitem{AL06} K. Athreya and S. Lahiri, {\it Measure Theory and Probability Theory}, Springer, New York, 2006.

\bibitem{BR60} R. R. Bahadur
and Rao R. Ranga: On deviations of the sample mean, Ann. Math. Statist. {\bf 31} (1960), 1015--1027.

\bibitem{BK65} L.E. Baum and M. Katz. {\it Convergence rates in the law of large numbers}, Trans. Amer. Math. Soc. {\bf 120} (1965), 108--123.

\bibitem{BP17} C.J. Bishop, and Y. Peres, {\it Fractals in Probability and Analysis}, Cambridge University Press, Cambridge, 2017.

\bibitem{Bow75} R. Bowen: {\it Equilibrium states and the ergodic theory of Anosov
diffeomorphisms,} Second revised edition. Lecture Notes in Mathematics, {\bf 470} Springer-Verlag, Berlin 2008.


\bibitem{ChaCol05} J.-R. Chazottes and P. Collet, {\it Almost-sure central limit theorems and the Erd\H{o}s-R\'enyi law for expanding maps of the interval}, Ergodic Theory Dynam. Systems {\bf 25} (2005), 419--441.

\bibitem{Chen} R. Chen, {\it A remark on the tail probability of a distribution}, J. Multivariate analysis {\bf 8} (1978), 328--333.


\bibitem{HR47} P.L. Hsu and H. Robbins, {\it Complete convergence and the law of large numbers}, Proc. Nat. Acad. Sci. USA {\bf33} (1947), 25--31

\bibitem{Erdos} P. Erd\H{o}s, {\it On a theorem of Hsu and Robbins}, Ann. Math. Statistics {\bf20} (1949), 286--291

\bibitem{FGP} P. K. Friz, P. Gassiat, and P. Pigato, {\it Precise asymptotics: Robust stochastic volatility models}, https://www.ceremade.dauphine.fr/~gassiat/data/FGP-AsymptRobust.pdf.

\bibitem{GS1} A. Gut and A. Sp\u{a}taru, {\it Precise asymptotics in the Baum-Katz and Davis law of large numbers}, J. Math. Anal. Appl. {\bf248} (2000), 233--246.

\bibitem{GS2} A. Gut and A. Sp\u{a}taru, {\it Precise asymptotics in the law of the iterated logarithm}, Ann. Probab {\bf 28} (2000), 1870--1883.

\bibitem{GutSte} A. Gut and J. Steinebach, {\it Precise asymptotics: A general approach}, Acta. Math. Hungar {\bf 138} (2013), 365--385.



\bibitem{Heyde} C. Heyde, {\it A supplement to the strong law of large numbers}, J. Appl. Probability {\bf 12} (1975), 173--175.


\bibitem{Katz} M. Katz. {\it The probability in the tail of a distribution}, Ann. Math. Statist. {\bf34} (1963), 312--318.

\bibitem{KesStr07} M. Kesseb\"ohmer and B. Stratmann, {\it A multifractal analysis for Stern-Brocot intervals, continued fractions
and Diophantine growth rates}, J. Reine Angew. Math. {\bf 605} (2007), 133--163.


\bibitem{Levy} P. L\'{e}vy, {\it Sur les lois de probabilit\'{e} dont d\'{e}pendent les quotients complets et incomplets d\'{u}ne fraction continue}, Bull. Soc. Math. {\bf 57} (1929), 178--194.

\bibitem{Liv71} A. N. Livsi\v{c},
Homology propreties of $Y$-systems, Math. Notes, {\bf 10} (1971) 758--763.

\bibitem{Liv72}  A. N. Livsi\v{c}, Cohomology of dynamical systems, Mathematics of the USSR Izvestija, {\bf 6} (1972) 1278--1301.



\bibitem{Mis81} G. Misevi\u{c}ius, {\it Estimate of the reminder term in the limit theorem for denominators of continued fractions}, Lithuanian Math. J. {\bf 21} (1981), 245--253.

\bibitem{Mor94} T. Morita, {\it Local limit theorem and distribution of periodic orbits of Lasota-Yorke transformations with infinite Markov partition}, J. Math. Soc. Japan {\bf 46} (1994), 309--343.

\bibitem{ParPol97} W. Parry and M. Pollicott, {\it The Livsic cocycle equation
for compact Lie group extensions of hyperbolic systems.} J. London Math. Soc.
{\bf 56} (1997) 405--416.



\bibitem{PLS08} V. H. Pe\~{n}a, T. Lai and Q. Shao, {\it Self-normalized Processes: Limit Theory and Statistical Applications}, Springer, Berlin, 2009.

\bibitem{PolWei99} M. Pollicott and H. Weiss, {\it Multifractal analysis of Lyapunov exponent for continued fraction and Manneville-Pomeau transformations and applications to Diophantine approximation}, Commun. Math. Phys. {\bf 207} (1999), 145--171.

\bibitem{Ren57} A. R\'enyi, Representations for real numbers and their ergodic properties.
Acta math. Acad. Sci. Hungar. {\bf 8} (1957), 477--493.


\bibitem{Spitzer} F. Spitzer, {\it A combinatorial lemma and its applications to probability theory}, Trans. Amer. Math. Soc. {\bf 82} (1956), 323--339.

\bibitem{Spataru} A. Sp\u{a}taru, {\it Precise asymptotics in Spitzer's law of large numbers}, J. Theoret, Probab. {\bf 12} (1999), 811--819.

\bibitem{Tak20} H. Takahasi, {\it Large deviations for denominators of continued fractions}, Nonlinearity {\bf 33} (2020), 5861--5874.


\bibitem{Wad96} S. Waddington, {\it Large deviation asymptotics for Anosov flows}, Ann. Inst. Henri Poincar\'e {\bf 13} (1996), 445--484.




\end{thebibliography}
\end{document}